\newtheorem{prop}{Proposition}
\newtheorem{proposition}[prop]{Proposition}
\newtheorem{thm}{Theorem}
\newtheorem{theorem}[thm]{Theorem}
\newtheorem{cor}{Corollary}
\newtheorem{corollary}[cor]{Corollary}
\theoremstyle{remark}
\newtheorem*{remark}{\bf{Remarks}}
\theoremstyle{remar}
\newtheorem*{Acknowledgement}{Acknowledgements}
\def\\{\cr}
\def\({\left(}
\def\){\right)}
\def\<{\langle}
\def\>{\rangle}
\def\func#1{\mathop{\rm #1}}%
\begin{document}
\title[Growth of polynomials]{On the growth and zeros
of polynomials attached to arithmetic functions}
\author{Bernhard Heim }
\address{Faculty of Mathematics, Computer Science, and Natural Sciences,
RWTH Aachen University, 52056 Aachen, Germany}
\email{bernhard.heim@rwth-aachen.de}
\author{Markus Neuhauser}
\address{Kutaisi International University (KIU), 5/7, Youth Avenue,  Kutaisi, 4600, Georgia}
%\address{Faculty of Mathematics, Computer Science, and Natural Sciences,
%RWTH Aachen University, 52056 Aachen, Germany}
%%\email{markus.neuhauser@gutech.edu.om}
\email{markus.neuhauser@kiu.edu.ge}

\subjclass[2010]{Primary 11F30, 11M36, 26C10; Secondary  05A17, 11B37}
\keywords{Arithmetic functions, Dedekind eta function, Fourier coefficients, polynomials, recurrence relations}
\pagenumbering{arabic}

\begin{abstract}
In this paper we investigate growth properties and the zero distribution of
polynomials attached to arithmetic functions $g$ and $h$, where $g$ is
normalized, of moderate growth, and
$0<h(n) \leq h(n+1)$. We put $P_0^{g,h}(x)=1$ and 
\begin{equation*}
P_n^{g,h}(x) := \frac{x}{h(n)} \sum_{k=1}^{n} g(k) \, P_{n-k}^{g,h}(x).
\end{equation*}
As an application we obtain the best known result on the domain of the non-vanishing 
of the Fourier coefficients of powers of the Dedekind $\eta$-function. Here, $g$ is the sum of divisors and $h$ the identity function.
Kostant's result on the representation of 
simple complex Lie algebras and Han's  results on the Nekrasov--Okounkov hook length formula are extended. 
The polynomials are related to reciprocals of Eisenstein series, Klein's $j$-invariant, and Chebyshev polynomials of the second kind. 
\end{abstract}

\maketitle
\newpage
\section{Introduction}
Properties of coefficients of generating series \cite{Wi06}, especially
Fourier coefficients of powers of the Dedekind $\eta$-function have been 
the focus of research since the times of Euler \cite{Ma72,Se85, Ap90, AE04, On03,HNW18}:
\begin{equation}
\label{eq:1}
\eta \left( \tau \right) ^{r}:=q^{\frac{r}{24}}\prod _{m=1}^{\infty }\left( 1-q^{m}\right) ^{r}=
q^{\frac{r}{24}}\sum _{n=0}^{\infty }a_{n}\left( r \right) q^{n}.
\end{equation}
Here, $q := e^{2\pi i\tau }$, $\func{Im}\left( \tau \right) >0$ and $r \in \mathbb{Z}$. 
The coefficients are special values of the D'Arcais polynomials $P_n(x)$ \cite{DA13, Ne55, Co74, We06}.
It has been recently noticed that the growth and vanishing properties of these polynomials
have much in common with properties of other interesting polynomials \cite{HLN19,HN20B}.
These include special orthogonal polynomials as associated Laguerre 
polynomials and Chebyshev polynomials of the second kind. Also included are polynomials 
attached to reciprocals of the Klein's $j$-invariant and Eisenstein series \cite{HN20A,HN20C}.

In this paper we investigate growth properties and the zero distribution of
polynomials attached to arithmetic functions $g$ and $h$ inspired by Rota \cite{KRY09}. 

Let $g$ be normalized and of moderate growth.
Further, let $0<h(n) \leq h(n+1)$. We put $P_0^{g,h}(x)=1$ and 
\begin{equation} \label{gh}
P_n^{g,h}(x) := \frac{x}{h(n)} \sum_{k=1}^{n} g(k) \, P_{n-k}^{g,h}(x).
\end{equation}
This definition includes all mentioned examples. 
Before providing examples
and
explicit formulas for these polynomials, we give one application for the coefficients of the
Dedekind $\eta $-function. Let $g(n)=\sigma (n):= \sum_{d \mid
n}d$, $h(n)= \func{id}(n)=n$ and 
$a_n(r)$ be defined by (\ref{eq:1}), the $n$th coefficient of the $r$th power of the
Dedekind $\eta $-function. Han \cite{Ha10} observed that the Nekrasov--Okounkov hook length formula 
\cite{NO06, We06} implies that $a_n(r) \neq 0$ if $ r > n^2 -1$. This improves previous results by Kostant \cite{Ko04}.
In \cite{HN20B} we proved that
\begin{equation} \label{improve}
a_n(r) \neq 0 \text{ holds for }  r > \kappa \cdot
(n-1) \text{ where } \kappa =15. 
\end{equation}
Numerical investigations show that $\kappa$ has to be larger than $9.55$ (see Table \ref{roots}). 
In this paper we prove that (\ref{improve}) is already true for $\kappa = 10.82$.

Since the definition of $P_n^{g,h}(x)$ is quite
abstract, we provide two examples of families of polynomials,
to familiarize the reader with the types of polynomials
we are studying. At first, they appear
to have nothing in common.

Let us start with the Nekrasov--Okounkov hook length formula \cite{NO06}.
Let $\eta(\tau)$ be the Dedekind $\eta$-function.
Let $\lambda$ be a partition of $n$ 
and let $|\lambda|=n$. By
$\mathcal{H}(\lambda)$ we denote the multiset of hook lengths associated
with $\lambda$
and by $\mathcal{P}$,
the set of all partitions.
The Nekrasov--Okounkov hook length formula (\cite{Ha10}, Theorem 1.2) states that
\begin{equation} \label{ON}
\sum_{n=0}^{\infty} P_n^{\sigma}(z) \, q^n =
\sum_{ \lambda \in \mathcal{P}} q^{|\lambda|} \prod_{ h \in \mathcal{H}(\lambda)} \left(  1 + \frac{z-1}{h^2} \right) 
=   
q^{\frac{z}{24}} \eta(\tau)^{-z}.
\end{equation}
The identity (\ref{ON}) is valid for all $z \in \mathbb{C}$.
Note that the $P_n^{\sigma}(x)$ are integer-valued polynomials of degree $n$. 
From the formula it follows that
$(-1)^n P_n^{\sigma}(x) >0$ for all real $x < -(n^2+1)$.

The second example is of a more artificial nature, discovered recently \cite{HN20A}, when
studying the $q$-expansion of the reciprocals of Klein's $j$-invariant and reciprocals of Eisenstein series \cite{BB05,BK17,HN20C}.
Let $$j(\tau)= \sum_{n=-1}^{\infty} c(n) q^n = q^{-1} + 744 + 196884q+ \ldots$$ 
denote Klein's $j$-invariant.
Asai, Kaneko, and Ninomiya \cite{AKN97} proved
that the coefficients of the $q$-expansion of
$1/j(\tau)$ are non-vanishing and have strictly alternating signs. This follows from
their result on the zero distribution of the $n$th
Faber polynomials $\varphi_n \left( x\right) $ and
the denominator formula for the monster Lie algebra. The zeros of the Faber polynomials are simple 
and lie in the interval $(0,1728)$. They obtained the remarkable identity:
\begin{equation}
\frac{1}{j(\tau)} = \sum_{n=1}^{\infty} \varphi_n'(0) \, 
\frac{q^n}{n}.\end{equation}
Let $c^{*}(n):= c(n)/744$. Define the polynomials $Q_{j,n}(x)$ by
\begin{equation}
\sum_{n=0}^{\infty} Q_{j,n}(x) \, q^n :=
\frac{1}{ 1 - x \sum_{n=1}^{\infty} c^{*}(n) \, q^n }.
\end{equation}
We have proved in \cite{HN20A} that $Q_{j,n}(x) = Q_{\gamma_2,n}(x) + 2x Q_{\gamma_2,n}'(x)+ \frac{x^2}{2}  Q_{\gamma_2,n}''(x)$, where $Q_{\gamma_2,n}(x)$ are polynomials attached to Weber's cubic root
function $\gamma_2$ of $j$ in a similar way. We have also proved that $Q_{\gamma_2,n}(z)\neq 0$ for all $ \vert z \vert > 82.5$. Hence,
the identity 
$$ \frac{\varphi_n'(0)}{n} =
Q_{j,n}(-744) = 
\left( Q_{\gamma_2,n}(x) + 2 x Q_{\gamma_2,n}'(x)+ \frac{x^2}{2}  Q_{\gamma_2,n}''(x)\right)_{\vert_{x= - 248}}
$$
restates and extends the result of \cite{AKN97}.

Now, let $g(n)$ be a normalized arithmetic function with moderate growth, 
such that $\sum_{n=1}^{\infty} \vert g(n) \vert \, T^n$
is analytic at $T=0$. Then the illustrated examples are 
special cases of polynomials $P_n^g(x)$ and $Q_n^g(x)$
defined by
\begin{eqnarray}
\sum_{n=0}^{\infty} P_n^g(z) \, q^n & = 
& \text{exp} \left( z \sum_{n=1}^{\infty} g(n) \frac{q^{n}}{n}\right), \label{P}
\\
\sum_{n=0}^{\infty} Q_n^g(z) \, q^n & = 
& \frac{1}{1- z \sum_{n=1}^{\infty^{\phantom{x}}} g(n) q^n}. \label{Q}
\end{eqnarray}
Note that $P_n^{\func{id}}(x)= x \, L_{n-1}^{(1)}(-x)$ are 
associated Laguerre polynomials (see \cite{HLN19}). Letting
$g(n)= \sigma(n)$, 
then we recover the polynomials provided by the Nekrasov--Okounkov hook length formula.
The polynomials $Q_n^{\func{id}}(x)$ are related to the Chebyshev polynomials of the second kind \cite{HNT20}.

It is easy to see that $P_n^g(z)$ and  $Q_n^g(z)$ are special cases of 
polynomials $P_n^{g,h}(x)$ defined by the recursion formula (\ref{gh}).
Here, $P_n^g(x) = P_n^{g,\func{id}}(x)$ and $Q_n^g(x) = P_n^{g,\mathbf{1}}(x)$.
In the next section, we state the main results of this paper.
%%
%%
%%                      Statement of Results
%%
%%
\section{Statement of main results}

Let $g,h$ be arithmetic functions. Assume that $g$ be normalized
and $ 0 < h(n) \leq h(n+1)$.
It is convenient to extend $h$ by $h(0):=0$.

We start by recalling what is known \cite{HNT20, HN20A, HN20B}. 
Assume that $G_1(T):= \sum_{k=1}^{\infty} \vert g(k+1)\vert \, T^k$ has a positive radius $R$ of convergence.
Let $\kappa > 0$ be given, such that $G_1(2/\kappa) \leq \frac{1}{2}$. Let $x \in \mathbb{C}$.
Then we have for all $\vert x \vert  > \kappa \,\, h(n-1)$:
\begin{equation}\label{old}
    \frac{ \vert x \vert }{2 \, h(n) }   \vert P_{n-1}^{g,h}(x) \vert
    < \vert P_n^{g,h}(x) \vert    < \frac{ 3 \, \vert  x \vert }{2 \, h(n) }   \left\vert P_{n-1}^{g,h}(x)\right| .   
\end{equation}
This implies that $P_n^{g,h}(x)\neq 0$ for all $\vert x \vert  > \kappa \,\, h(n-1)$ and
$(-1)^n P_n^{g,h}(x) > 0$ if $x<-\kappa h\left( n-1\right) $.
Let $g(n)= \sigma(n)$. In \cite{HN20B} we proved
that $\kappa=15$ can is an acceptable value.
In the following we state our two main results: Improvement A and Improvement B. 
\subsection{Improvement A}
The following result
reproduces our previous result (\ref{old}), if we choose $\varepsilon = \frac{1}{2}$.
\begin{theorem}
\label{schwaecher}Let $0<\varepsilon <1$. Let
$R>0$ be the radius of convergence of
\[
G_{1}\left( T\right) =\sum _{k=1}^{\infty }\left| g\left( k+1\right) \right| T^{k}.
\]
Let $0<T_{\varepsilon }<R$ be such that
$G_{1}\left( T_{\varepsilon }\right) \leq \varepsilon $
and $\kappa=\kappa_{\varepsilon }=\frac{1}{1-\varepsilon }\frac{1}{T_{\varepsilon }}$.
Then
\begin{equation}\label{Th1:formel}
\left| P_{n}^{g,h}\left( x\right) - \frac{x}{h\left( n\right) } P_{n-1}^{g,h}\left( x\right) \right| < \varepsilon \frac{\left| x\right| }{h\left( n\right) }\left| P_{n-1}^{g,h}\left( x\right) \right|,
\end{equation}
if
$\left| x \right| >  {\kappa}\,\, h(n-1)$ for all
$n\geq 1$.
\end{theorem}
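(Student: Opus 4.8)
The plan is to argue by strong induction on $n$, establishing simultaneously that the asserted inequality (\ref{Th1:formel}) holds and that $P_n^{g,h}(x)\neq 0$, whenever $\left|x\right|>\kappa\,h(n-1)$; I abbreviate $\rho_m:=\left|P_m^{g,h}(x)\right|$. The first step is to exploit normalization, $g(1)=1$, by splitting off the term $k=1$ in the recursion (\ref{gh}), which gives
\[
P_n^{g,h}(x)-\frac{x}{h(n)}P_{n-1}^{g,h}(x)=\frac{x}{h(n)}\sum_{k=2}^{n}g(k)\,P_{n-k}^{g,h}(x).
\]
Taking absolute values and reindexing by $j=k-1$ reduces the theorem to the claim
\[
\sum_{j=1}^{n-1}\left|g(j+1)\right|\rho_{\,n-1-j}<\varepsilon\,\rho_{\,n-1}.
\]
For $n=1$ the sum is empty and the left-hand side of (\ref{Th1:formel}) is $0$, while the right-hand side is positive (the hypothesis $\left|x\right|>\kappa\,h(0)=0$ merely says $x\neq0$); this settles the base case and also records $\rho_1=\left|x\right|/h(1)>0$.

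For the inductive step I would first extract a ratio estimate. Fix $1\le m\le n-1$. Since $\left|x\right|>\kappa\,h(n-1)\ge\kappa\,h(m-1)$, the induction hypothesis applies at index $m$, and combined with the reverse triangle inequality it yields
\[
\rho_m\ \ge\ \frac{\left|x\right|}{h(m)}\rho_{m-1}-\left|P_m^{g,h}(x)-\frac{x}{h(m)}P_{m-1}^{g,h}(x)\right|\ >\ (1-\varepsilon)\,\frac{\left|x\right|}{h(m)}\,\rho_{m-1}.
\]
Because $\rho_{m-1}>0$ by induction, this also forces $\rho_m>0$, so positivity propagates. Invoking the monotonicity of $h$, namely $h(m)\le h(n-1)<\left|x\right|/\kappa$, together with the defining relation $\kappa=\frac1{(1-\varepsilon)T_\varepsilon}$, the displayed bound rearranges to
\[
\frac{\rho_{m-1}}{\rho_m}\ <\ \frac{h(m)}{(1-\varepsilon)\left|x\right|}\ <\ \frac{1}{(1-\varepsilon)\kappa}\ =\ T_\varepsilon\qquad(1\le m\le n-1).
\]

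Next I would telescope these strict ratio bounds over $m=n-j,\dots,n-1$ to obtain $\rho_{\,n-1-j}<T_\varepsilon^{\,j}\,\rho_{\,n-1}$ for every $j\ge1$ (strict, as each $\rho_m$ is positive). Multiplying by $\left|g(j+1)\right|\ge0$ and summing gives
\[
\sum_{j=1}^{n-1}\left|g(j+1)\right|\rho_{\,n-1-j}\ \le\ \rho_{\,n-1}\sum_{j=1}^{n-1}\left|g(j+1)\right|T_\varepsilon^{\,j}\ \le\ \rho_{\,n-1}\,G_1(T_\varepsilon)\ \le\ \varepsilon\,\rho_{\,n-1},
\]
where the last inequality is the hypothesis $G_1(T_\varepsilon)\le\varepsilon$. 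Strictness is then automatic: if some coefficient $g(j+1)$ with $1\le j\le n-1$ is nonzero the first step is strict, and if all of them vanish the left-hand side is $0<\varepsilon\,\rho_{n-1}$. This is precisely the reduced claim, closing the induction.

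The point that needs the most care is the apparent circularity, since the lower bound on $\rho_m$ feeding the ratio estimate is itself a consequence of (\ref{Th1:formel}) at the smaller index $m$. Strong induction resolves this, but only because the single hypothesis $\left|x\right|>\kappa\,h(n-1)$ is strong enough to license (\ref{Th1:formel}) at \emph{every} index $m\le n-1$ at once; this is exactly where the monotonicity $h(m)\le h(n-1)$ does its work, letting one bound on $\left|x\right|$ control all the lower ratios simultaneously. Checking that $\frac{1}{(1-\varepsilon)\kappa}$ collapses to exactly $T_\varepsilon$ is the only place the specific value of $\kappa$ enters, and it is what makes the geometric sum close up as $G_1(T_\varepsilon)\le\varepsilon$.
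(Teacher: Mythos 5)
Your proof is correct and follows essentially the same route as the paper's: induction on $n$ using the hypothesis at all lower indices, splitting off the $k=1$ term via $g(1)=1$, the reverse-triangle ratio estimate $\rho_{m-1}/\rho_m < h(m)/\bigl((1-\varepsilon)\left|x\right|\bigr)$, telescoping, and comparison with $G_1(T_\varepsilon)\leq\varepsilon$. The only cosmetic difference is that you bound each ratio by $T_\varepsilon$ immediately (and make the propagation of $\rho_m>0$ and the strictness edge case explicit, which the paper leaves implicit), whereas the paper carries the factor $h(n-1)/\bigl((1-\varepsilon)\left|x\right|\bigr)$ through the sum and only invokes the monotonicity of $G_1$ at the very end; the two are equivalent.
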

This result can be reformulated in
the following way, which is more suitable for applications
to growth and non-vanishing properties.
\begin{theorem}\label{versionA2}
Let $0<\varepsilon <1$. Let
$R>0$ be the radius of convergence of
\[
G_{1}\left( T\right) =\sum _{k=1}^{\infty }\left| g\left( k+1\right) \right| T^{k}.
\]
Let $0<T_{\varepsilon }<R$ be such that
$G_{1}\left( T_{\varepsilon }\right) \leq \varepsilon $
and $\kappa=\kappa_{\varepsilon }=\frac{1}{1-\varepsilon }\frac{1}{T_{\varepsilon }}$.
Then
\begin{equation}
\left( 1-\varepsilon \right) \frac{\left| x\right|
}{h\left( n\right) }\left|P_{n-1}^{g,h}\left( x\right) \right| <\left| P_{n}^{g,h}\left( x\right) \right| < \left( 1+\varepsilon \right) \frac{\left| x\right|
}{h\left( n\right) }\left| P_{n-1}^{g,h}\left( x\right) \right|,
\end{equation}
if
$\left| x \right| >  {\kappa}  \,\, h(n-1)
$ for all
$n\geq 1$.
\end{theorem}

\begin{corollary}
\label{schwaechererkorollar} Let $\kappa$ be chosen as in Theorem
\ref{schwaecher} or as in Theorem \ref{versionA2}. Then
$$P_{n}^{g,h}\left( x\right) \neq 0 \text{ for }
\left| x\right| >\kappa \,\, h(n-1).$$
\end{corollary}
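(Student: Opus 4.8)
The plan is to deduce the corollary directly from the lower bound in Theorem \ref{versionA2}, by induction on $n$. Fix $x$ with $\left| x\right| > \kappa\, h(n-1)$. Because $h$ is non-decreasing with $h(0)=0$, the chain $0=h(0)\le h(1)\le\cdots\le h(n-1)$ shows that $\left| x\right| > \kappa\, h(m-1)$ holds simultaneously for every $1\le m\le n$. Thus the hypothesis of Theorem \ref{versionA2} is satisfied at each level $m\le n$, and in particular $\left| x\right|>0$.

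The base case is immediate, since $P_0^{g,h}(x)=1\ne 0$. For the inductive step, suppose $P_{m-1}^{g,h}(x)\ne 0$. Applying the lower estimate of Theorem \ref{versionA2} at level $m$ gives
\[
\left| P_m^{g,h}(x)\right| > (1-\varepsilon)\,\frac{\left| x\right|}{h(m)}\,\left| P_{m-1}^{g,h}(x)\right| .
\]
Each factor on the right is strictly positive: $1-\varepsilon>0$ because $0<\varepsilon<1$, while $\left| x\right|>0$ by the previous paragraph, and $h(m)>0$ by hypothesis. Hence $\left| P_m^{g,h}(x)\right|>0$, i.e.\ $P_m^{g,h}(x)\ne 0$. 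Running the induction up to $m=n$ yields $P_n^{g,h}(x)\ne 0$, which is the assertion.

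I expect no real obstacle here once Theorem \ref{versionA2} is in hand; the only point needing care is the bookkeeping that lets the hypothesis at index $n$ cascade down to all smaller indices, and this is precisely where the monotonicity of $h$ is used. Equivalently, one may iterate the inequality all the way to $m=1$ to obtain the explicit lower bound $\left| P_n^{g,h}(x)\right| > \prod_{m=1}^{n}(1-\varepsilon)\,\frac{\left| x\right|}{h(m)}$, whose right-hand side is manifestly positive, making the non-vanishing transparent.
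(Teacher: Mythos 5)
Your proof is correct and takes essentially the same route as the paper: the paper's own argument is just a one-line compression of your induction, noting that $\left(1-\varepsilon\right)\frac{\left|x\right|}{h\left(n\right)}\neq 0$ and $P_{0}^{g,h}\left(x\right)=1$ and then invoking the lower bound of Theorem \ref{versionA2}. The cascading of the hypothesis from index $n$ down to all $m\le n$ via the monotonicity of $h$, which you spell out, is exactly what the paper leaves implicit.
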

\begin{proof}
This follows from Theorem \ref{versionA2}, since 
$\left( 1-\varepsilon \right) \frac{\left| x\right|
}{h\left( n\right) } \neq 0$ and $P_0^{g,h}(x)=1$.
\end{proof}
We note that the smallest possible $\kappa$ is independent of the function $h(n)$.
It is also possible to provide a lower bound for the best possible $\kappa$.

\begin{proposition}
The constant $\kappa_{\varepsilon }$ obtained in
Theorem \ref{schwaecher} has the following lower bound:
\[
\kappa_{\varepsilon }\geq \frac{\left| g\left( 2\right) \right| }{\left( 1-\varepsilon \right) \varepsilon }.
\]
As a lower bound\/ \emph{independent} of $\varepsilon $ we
have $4\left| g\left( 2\right) \right| $.
\end{proposition}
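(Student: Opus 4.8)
The plan is to read off a lower bound on $\kappa_{\varepsilon}$ directly from the defining constraint $G_{1}(T_{\varepsilon})\leq \varepsilon$, using nothing more than the leading term of the power series $G_{1}$. First I would note that every coefficient of $G_{1}(T)=\sum_{k=1}^{\infty}|g(k+1)|T^{k}$ is nonnegative and $T_{\varepsilon}>0$, so discarding all terms with $k\geq 2$ gives the crude estimate $G_{1}(T_{\varepsilon})\geq |g(2)|\,T_{\varepsilon}$. Combining this with $G_{1}(T_{\varepsilon})\leq \varepsilon$ yields $|g(2)|\,T_{\varepsilon}\leq \varepsilon$, hence
\[
T_{\varepsilon}\leq \frac{\varepsilon}{|g(2)|}\qquad\text{and therefore}\qquad \frac{1}{T_{\varepsilon}}\geq \frac{|g(2)|}{\varepsilon},
\]
where I may assume $g(2)\neq 0$, the inequality being vacuous otherwise.

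Next I would substitute this into the definition $\kappa_{\varepsilon}=\frac{1}{1-\varepsilon}\frac{1}{T_{\varepsilon}}$ from Theorem~\ref{schwaecher}. Since $1-\varepsilon>0$, multiplying the bound $\frac{1}{T_{\varepsilon}}\geq \frac{|g(2)|}{\varepsilon}$ by $\frac{1}{1-\varepsilon}$ immediately produces
\[
\kappa_{\varepsilon}\geq \frac{|g(2)|}{(1-\varepsilon)\,\varepsilon},
\]
which is the first assertion. I would stress that this holds for \emph{every} admissible $T_{\varepsilon}$, because the estimate $G_{1}(T_{\varepsilon})\geq |g(2)|\,T_{\varepsilon}$ is valid for any $0<T_{\varepsilon}<R$ satisfying $G_{1}(T_{\varepsilon})\leq\varepsilon$, regardless of which such value one selects.

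For the bound independent of $\varepsilon$, I would minimize the right-hand side over $\varepsilon\in(0,1)$. The quantity $(1-\varepsilon)\varepsilon=\varepsilon-\varepsilon^{2}$ is a downward-opening parabola attaining its maximum $\tfrac14$ at $\varepsilon=\tfrac12$, so $\frac{1}{(1-\varepsilon)\varepsilon}\geq 4$ for all $\varepsilon\in(0,1)$. Plugging this into the previous display gives $\kappa_{\varepsilon}\geq 4\,|g(2)|$ uniformly in $\varepsilon$, which is the second assertion; the extremal value $\varepsilon=\tfrac12$ corresponds exactly to the normalization recovering the earlier estimate~(\ref{old}).

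This argument is entirely elementary, so I do not anticipate a genuine obstacle. The only points requiring mild care are the degenerate case $g(2)=0$ (where both claimed bounds are trivially true) and the fact that the estimate deliberately throws away all higher coefficients of $G_{1}$; retaining only the $k=1$ term is precisely what keeps the bound clean, and it also indicates that the bound is close to sharp whenever $g(2)$ dominates the growth of $G_{1}$ near $T=0$.
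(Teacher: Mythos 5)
Your proof is correct and follows essentially the same route as the paper: truncate $G_{1}$ to its leading term to force $T_{\varepsilon}\leq \varepsilon/\left|g\left(2\right)\right|$, substitute into $\kappa_{\varepsilon}=\frac{1}{\left(1-\varepsilon\right)T_{\varepsilon}}$, and then use the fact that $\left(1-\varepsilon\right)\varepsilon\leq\frac{1}{4}$ (the paper phrases this via the AM--GM inequality, you via the vertex of the parabola, which is the same observation). Your explicit handling of the degenerate case $g\left(2\right)=0$ is a small addition the paper omits, but otherwise the arguments coincide.
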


\begin{proof}
If we consider only the first
order term of the power series
\[
G_{1}\left( T\right) =\sum _{k=1}^{\infty }\left| g\left( k+1\right) \right| T^{k},
\]
then for positive $T$ we always have
$G_{1}\left( T\right) =\sum _{k=1}^{\infty }\left| g\left( k+1\right) \right| T^{k}\geq \left| g\left( 2\right) \right| T$.
Thus, $G_{1}\left( T\right) >\varepsilon $ if
$T>\frac{\varepsilon }{\left| g\left( 2\right) \right| }$. 
The case 
$G_{1}\left( T\right) \leq \varepsilon $ is only possible if
$T\leq \frac{\varepsilon }{\left| g\left( 2\right) \right| }$.
This forces
$T_{\varepsilon }\leq \frac{\varepsilon }{\left| g\left( 2\right) \right| }$.

Applying the last inequality now to
\begin{equation}
\kappa_{\varepsilon }:=\frac{1}{\left( 1-\varepsilon \right) T_{\varepsilon }}
\label{eq:kappaschwaecher}
\end{equation}
Theorem~\ref{schwaecher} shows the lower bound
$\kappa_{\varepsilon }\geq \frac{\left| g\left( 2\right) \right| }{\left( 1-\varepsilon \right) \varepsilon }$
in the
proposition depending on $\varepsilon $. The minimal value of this lower bound is
at $\varepsilon =\frac{1}{2}$ because of the inequality of
arithmetic and geometric means
$\left( 1-\varepsilon \right) \varepsilon \leq \left( \frac{1-\varepsilon +\varepsilon }{2}\right) ^{2}=\frac{1}{4}$.
\end{proof}

\subsection{Improvement B}

\begin{theorem}
\label{main}Let $0<\varepsilon <1$. Let
$R>0$ be the radius of convergence of
\[
G_{2}\left(  T \right) =\sum _{k=2}^{\infty }\left| g\left( k+1\right) -g\left( 2\right) g\left( k\right) \right| T^{k}.
\]
Let $0<T_{\varepsilon }<R$ be such that
$G_{2}\left( T_{\varepsilon }\right) \leq \varepsilon$ and  
$$ \kappa=\kappa_{\varepsilon }:=\frac{1}{1-\varepsilon }\left( \frac{1}{T_{\varepsilon }}+\left| g\left( 2\right) \right| \right). $$
Then
\begin{equation}\label{B1}
\left| P_{n}^{g,h}\left( x\right) - \frac{x+g\left( 2\right) h\left( n-1\right) }{h\left( n\right) } P_{n-1}^{g,h}\left( x\right) \right| < \varepsilon \frac{\left| x\right| }{h\left( n\right) }\left| P_{n-1}^{g,h}\left( x\right) \right|
\end{equation}
if
$\left| x \right| >  {\kappa} \,\, h(n-1)
$ for all $n\geq 1$.
\end{theorem}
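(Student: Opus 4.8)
The plan is to prove (\ref{B1}) by strong induction on $n$, driven by an exact algebraic identity for its left-hand side together with a lower bound on the consecutive ratios $|P_n^{g,h}(x)|/|P_{n-1}^{g,h}(x)|$ that the induction generates as it proceeds.

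First I would derive the identity. Peeling the $k=1$ term off the recursion (\ref{gh}) and using that $g$ is normalized, so $g(1)=1$, gives $P_n^{g,h}(x) = \frac{x}{h(n)}P_{n-1}^{g,h}(x) + \frac{x}{h(n)}\sum_{k=2}^{n}g(k)P_{n-k}^{g,h}(x)$. Substituting the recursion for $P_{n-1}^{g,h}(x)$ into the shift term $\frac{g(2)h(n-1)}{h(n)}P_{n-1}^{g,h}(x)$ — where the factor $h(n-1)$ cancels — and reindexing yields
\[
P_n^{g,h}(x) - \frac{x+g(2)h(n-1)}{h(n)}P_{n-1}^{g,h}(x) = \frac{x}{h(n)}\sum_{m=1}^{n-1}\bigl(g(m+1)-g(2)g(m)\bigr)P_{n-1-m}^{g,h}(x).
\]
The $m=1$ summand is $g(2)-g(2)g(1)=0$, so the sum effectively starts at $m=2$; this is precisely why the combination $g(m+1)-g(2)g(m)$ defining $G_2$ is the right object, and it is the structural difference from Improvement A.

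Next comes the coupled inductive hypothesis. Assuming (\ref{B1}) at all steps up to $n-1$, the triangle inequality applied to (\ref{B1}) at a step $j$, together with $|x|>\kappa h(j-1)$ used to absorb the $g(2)h(j-1)$ shift, gives the ratio bound $|P_j^{g,h}(x)| > c\,\frac{|x|}{h(j)}|P_{j-1}^{g,h}(x)|$ with $c:=1-\varepsilon-|g(2)|/\kappa$; a short computation rewrites this as $c=(1-\varepsilon)/(1+|g(2)|T_\varepsilon)>0$, and in particular shows all $P_j^{g,h}(x)\neq 0$. Telescoping, and using that $h$ is nondecreasing so that $h(j)\le h(n-1)$ for $j\le n-1$, produces
\[
\frac{|P_{n-1-m}^{g,h}(x)|}{|P_{n-1}^{g,h}(x)|} < \left(\frac{h(n-1)}{c\,|x|}\right)^{m} < \frac{1}{(c\kappa)^{m}},
\]
the last step using $|x|>\kappa h(n-1)$.

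Finally I would take absolute values in the identity and insert these estimates, bounding the left-hand side of (\ref{B1}) by $\frac{|x|}{h(n)}|P_{n-1}^{g,h}(x)|\sum_{m=2}^{n-1}|g(m+1)-g(2)g(m)|\,(c\kappa)^{-m}$. The whole point of the definition of $\kappa$ is that $c\kappa=(1-\varepsilon)\kappa-|g(2)|=1/T_\varepsilon$ exactly, so this sum is dominated by $\sum_{m\ge 2}|g(m+1)-g(2)g(m)|\,T_\varepsilon^{m}=G_2(T_\varepsilon)\le\varepsilon$, which closes the induction; the base cases $n=1,2$ are trivial since the sum is empty and the identity reduces to an equality. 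The main obstacle is the bookkeeping of running the two statements simultaneously — (\ref{B1}) at step $n$ depends on the ratio bounds at earlier steps, which in turn come from (\ref{B1}) at those steps — and, more delicately, calibrating $c$ against $\kappa$ so that $c\kappa$ lands exactly on $1/T_\varepsilon$; this calibration is what forces the extra summand $|g(2)|$ inside $\kappa_\varepsilon$.
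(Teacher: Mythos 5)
Your proposal is correct and follows essentially the same route as the paper's proof: induction on $n$, the same exact identity expressing $P_{n}^{g,h}(x)-\frac{x+g(2)h(n-1)}{h(n)}P_{n-1}^{g,h}(x)$ as $\frac{x}{h(n)}\sum_{k=2}^{n-1}\bigl(g(k+1)-g(2)g(k)\bigr)P_{n-1-k}^{g,h}(x)$, the same ratio lower bounds from the induction hypothesis, telescoping, and the final estimate via $G_{2}(T_{\varepsilon})\leq\varepsilon$. The only (cosmetic) difference is that you absorb the shift $|g(2)|h(j-1)$ into a constant $c=1-\varepsilon-|g(2)|/\kappa$ with $c\kappa=1/T_{\varepsilon}$, whereas the paper keeps the expression $(1-\varepsilon)|x|-|g(2)|h(n-1)$ and observes that $|x|>\kappa\, h(n-1)$ makes the telescoping factor at most $T_{\varepsilon}$; these are the same calibration.
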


\begin{theorem}
\label{variantB} Let $0 <\varepsilon <1$. Let
$R>0$ be the radius of convergence of 
\[
G_{2}\left(  T \right) =\sum _{k=2}^{\infty }\left| g\left( k+1\right) -g\left( 2\right) g\left( k\right) \right| T^{k}.
\]
Let $0<T_{\varepsilon }<R$ be such that
$G_{2}\left( T_{\varepsilon }\right) \leq \varepsilon$ and  
$$ \kappa=\kappa_{\varepsilon }:=\frac{1}{1-\varepsilon }\left( \frac{1}{T_{\varepsilon }}+\left| g\left( 2\right) \right| \right). $$
Then
\begin{eqnarray}
&&\frac{
\left| x+g\left( 2\right) h\left( n-1\right) \right| -\varepsilon \left| x\right|
}{h\left( n\right) }\left|P_{n-1}^{g,h}\left( x\right) \right| \nonumber \\
&<&\left| P_{n}^{g,h}\left( x\right) \right|  < \frac{
\left| x+g\left( 2\right) h\left( n-1\right) \right| +\varepsilon \left| x\right|
}{h\left( n\right) }\left| P_{n-1}^{g,h}\left( x\right) \right| \label{B2}
\end{eqnarray}
if
$\left| x \right| >  {\kappa} \,\, h (n-1)
$ for all
$n\geq 1$.
\end{theorem}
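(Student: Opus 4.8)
The plan is to derive this statement directly from Theorem~\ref{main}, in exactly the way that Theorem~\ref{versionA2} is obtained from Theorem~\ref{schwaecher}: the two displays simply repackage a single two-sided estimate on the distance between $P_n^{g,h}(x)$ and an explicit main term. To this end I would introduce
\[
M := \frac{x + g(2)\, h(n-1)}{h(n)}\, P_{n-1}^{g,h}(x),
\qquad
\delta := \varepsilon\, \frac{|x|}{h(n)}\, \left| P_{n-1}^{g,h}(x)\right|,
\]
so that $|M| = \frac{|x + g(2) h(n-1)|}{h(n)}\, |P_{n-1}^{g,h}(x)|$, and Theorem~\ref{main} reads precisely $\left| P_n^{g,h}(x) - M \right| < \delta$, valid under the hypothesis $|x| > \kappa\, h(n-1)$ with the very same constant $\kappa=\kappa_\varepsilon$.

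The upper bound then follows from the ordinary triangle inequality,
\[
\left| P_n^{g,h}(x) \right| \leq |M| + \left| P_n^{g,h}(x) - M \right| < |M| + \delta,
\]
and substituting the expressions for $|M|$ and $\delta$ gives exactly the right-hand inequality in~(\ref{B2}). The lower bound follows symmetrically from the reverse triangle inequality,
\[
\left| P_n^{g,h}(x) \right| \geq |M| - \left| P_n^{g,h}(x) - M \right| > |M| - \delta,
\]
which upon substitution yields the left-hand inequality in~(\ref{B2}).

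Two minor points would need checking to make the argument airtight. First, the numerator $|x + g(2) h(n-1)| - \varepsilon |x|$ of the lower bound may be nonpositive; in that case the left inequality in~(\ref{B2}) is automatic since $|P_n^{g,h}(x)| \geq 0$, and the estimate $|M| - \delta$ remains a valid (if vacuous) lower bound, so nothing breaks. Second, the whole argument presupposes $|x| > \kappa\, h(n-1)$, the domain on which Theorem~\ref{main} applies; since the constant $\kappa_\varepsilon$ and the underlying data $G_2$, $T_\varepsilon$ are identical in both theorems, no new estimate on $\kappa$ is required here. I expect no genuine obstacle: the entire content of Improvement~B is concentrated in Theorem~\ref{main}, and the present statement is a purely formal consequence obtained by splitting the single inequality $|P_n^{g,h}(x) - M| < \delta$ into its two one-sided forms.
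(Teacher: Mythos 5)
Your proposal is correct and follows essentially the same route as the paper: the authors prove Theorem~\ref{variantB} by invoking Theorem~\ref{main} together with the triangle-inequality argument used for Theorem~\ref{versionA2}, which is exactly your decomposition into $|M|\pm\delta$ via the ordinary and reverse triangle inequalities. Your two added remarks (vacuousness of the lower bound when the numerator is nonpositive, and the shared constant $\kappa_\varepsilon$) are sound and merely make explicit what the paper leaves implicit.
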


\begin{corollary}
\label{hauptkorollar} Let $\kappa$ be chosen as in Theorem
\ref{main} or as in Theorem \ref{variantB}. Then
\begin{equation} \label{B3} P_{n}^{g,h}\left( x\right) \neq 0 \text{ for }
\left| x\right| >\kappa \,\, h(n-1).
\end{equation}
\end{corollary}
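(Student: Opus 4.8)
The plan is to deduce the non-vanishing from the lower bound in Theorem~\ref{variantB}, exactly in the spirit of the proof of Corollary~\ref{schwaechererkorollar}, but with the extra care that the lower factor $\left|x+g(2)h(n-1)\right| - \varepsilon\left|x\right|$ is no longer manifestly nonzero and must be shown to be strictly positive.

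First I would check that this factor is positive whenever $\left|x\right| > \kappa\, h(n-1)$. By the reverse triangle inequality and $h(n-1)\geq 0$,
\[
\left|x+g(2)h(n-1)\right| - \varepsilon\left|x\right| \geq (1-\varepsilon)\left|x\right| - \left|g(2)\right|\,h(n-1).
\]
Using $\left|x\right| > \kappa\, h(n-1)$ together with $1-\varepsilon > 0$, the right-hand side exceeds $\big((1-\varepsilon)\kappa - \left|g(2)\right|\big)h(n-1)$. By the definition $\kappa = \frac{1}{1-\varepsilon}\big(\frac{1}{T_{\varepsilon}} + \left|g(2)\right|\big)$ we have $(1-\varepsilon)\kappa - \left|g(2)\right| = 1/T_{\varepsilon} > 0$, so the factor is bounded below by $\frac{1}{T_{\varepsilon}}h(n-1)\geq 0$; and since the inequality coming from $\left|x\right| > \kappa\, h(n-1)$ is strict, the factor is in fact strictly positive (for $n=1$ this reduces to $(1-\varepsilon)\left|x\right|>0$, using $h(0)=0$ and $\left|x\right|>0$).

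With positivity of the lower factor in hand, I would run an induction on $n$. The base case is $P_0^{g,h}(x) = 1 \neq 0$. For the inductive step, note that since $h$ is nondecreasing, $\left|x\right| > \kappa\, h(n-1) \geq \kappa\, h(k-1)$ for every $1 \leq k \leq n$, so the hypotheses of Theorem~\ref{variantB} hold at each index $k$ up to $n$. The left inequality of (\ref{B2}) then forces $\left|P_k^{g,h}(x)\right| > 0$ whenever $\left|P_{k-1}^{g,h}(x)\right| > 0$, and iterating from $k=1$ to $k=n$ yields $P_n^{g,h}(x)\neq 0$. The only subtle point—and the step I would treat most carefully—is the strict positivity of $\left|x+g(2)h(n-1)\right| - \varepsilon\left|x\right|$ established above; everything else is a direct consequence of Theorem~\ref{variantB} and the monotonicity of $h$.
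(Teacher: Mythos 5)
Your proof is correct and follows the route the paper intends: the corollary is deduced from the lower bound in Theorem~\ref{variantB}, iterating from $P_0^{g,h}(x)=1$ exactly as in the proof of Corollary~\ref{schwaechererkorollar}. The paper states this corollary without proof, and your one genuine addition---verifying via the reverse triangle inequality and $(1-\varepsilon)\kappa-\left|g(2)\right|=1/T_{\varepsilon}>0$ that the factor $\left|x+g(2)h(n-1)\right|-\varepsilon\left|x\right|$ is strictly positive---is precisely the detail the paper leaves implicit (it appears only inside the proof of Theorem~\ref{main}), so your write-up is a faithful and complete version of the same argument.
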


\begin{proposition}
\label{untereschranke} The constant $\kappa_{\varepsilon }$ obtained in
Theorem \ref{main} has the following lower bound:
$$\kappa_{\varepsilon }\geq \frac{1}{1-\varepsilon }\left( \sqrt{\frac{\left| \left( g\left( 2\right) \right) ^{2}-g\left( 3\right) \right| }{\varepsilon }}+\left| g\left( 2\right) \right| \right).
$$
As a lower bound\/ \emph{independent} of $\varepsilon $ we
have $\frac{3}{2}\sqrt{3\left| \left( g\left( 2\right) \right) ^{2}-g\left( 3\right) \right| }+\left| g\left( 2\right) \right| $.
\end{proposition}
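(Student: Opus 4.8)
The plan is to follow the same strategy as in the preceding proposition, now applied to the series $G_{2}$ rather than $G_{1}$. First I would retain only the lowest-order term of
\[
G_{2}\left( T\right) =\sum _{k=2}^{\infty }\left| g\left( k+1\right) -g\left( 2\right) g\left( k\right) \right| T^{k},
\]
namely the $k=2$ term, whose coefficient is $\left| g\left( 3\right) -\left( g\left( 2\right) \right) ^{2}\right| =\left| \left( g\left( 2\right) \right) ^{2}-g\left( 3\right) \right| $. Since every summand is non-negative for $T>0$, this yields the pointwise bound $G_{2}\left( T\right) \geq \left| \left( g\left( 2\right) \right) ^{2}-g\left( 3\right) \right| T^{2}$. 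Writing $A:=\left| \left( g\left( 2\right) \right) ^{2}-g\left( 3\right) \right| $, the hypothesis $G_{2}\left( T_{\varepsilon }\right) \leq \varepsilon $ then forces $A\,T_{\varepsilon }^{2}\leq \varepsilon $, hence $T_{\varepsilon }\leq \sqrt{\varepsilon /A}$, equivalently $\frac{1}{T_{\varepsilon }}\geq \sqrt{A/\varepsilon }$ (when $A=0$ the ensuing inequality is trivially true).

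Substituting this into the definition $\kappa_{\varepsilon }=\frac{1}{1-\varepsilon }\left( \frac{1}{T_{\varepsilon }}+\left| g\left( 2\right) \right| \right) $ from Theorem~\ref{main} immediately gives the $\varepsilon $-dependent lower bound
\[
\kappa_{\varepsilon }\geq \frac{1}{1-\varepsilon }\left( \sqrt{\frac{\left| \left( g\left( 2\right) \right) ^{2}-g\left( 3\right) \right| }{\varepsilon }}+\left| g\left( 2\right) \right| \right),
\]
which is the first assertion. This step is entirely routine; the only point to watch is the degenerate case $\left( g\left( 2\right) \right) ^{2}=g\left( 3\right) $, where the stated bound correctly reduces to $\kappa_{\varepsilon }\geq \left| g\left( 2\right) \right| /\left( 1-\varepsilon \right) $.

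For the $\varepsilon $-independent bound I would split the right-hand side $L\left( \varepsilon \right) $ of the displayed inequality into the two summands $\frac{1}{1-\varepsilon }\sqrt{A/\varepsilon }$ and $\frac{\left| g\left( 2\right) \right| }{1-\varepsilon }$ and bound each separately. The second is at least $\left| g\left( 2\right) \right| $ because $1/\left( 1-\varepsilon \right) \geq 1$ on $\left( 0,1\right) $. For the first I would minimize over $\varepsilon $, that is, maximize $\sqrt{\varepsilon }\,\left( 1-\varepsilon \right) $ on $\left( 0,1\right) $; a first-derivative computation gives the critical point $\varepsilon =\frac{1}{3}$, where $\sqrt{\varepsilon }\,\left( 1-\varepsilon \right) =\frac{2}{3\sqrt{3}}$ is maximal, so that $\frac{1}{1-\varepsilon }\sqrt{A/\varepsilon }=\frac{\sqrt{A}}{\sqrt{\varepsilon }\,\left( 1-\varepsilon \right) }\geq \frac{3}{2}\sqrt{3A}$. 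Adding the two lower bounds produces $\kappa_{\varepsilon }\geq \frac{3}{2}\sqrt{3\left| \left( g\left( 2\right) \right) ^{2}-g\left( 3\right) \right| }+\left| g\left( 2\right) \right| $, as claimed.

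The one subtlety—and the only place where this differs from the preceding proposition—is that the two terms of $L\left( \varepsilon \right) $ attain their minima at different values of $\varepsilon $ (the first at $\varepsilon =\frac{1}{3}$, the second only in the limit $\varepsilon \to 0^{+}$). Consequently the resulting constant is a valid lower bound for every admissible $\varepsilon $ rather than the exact value of $\min_{\varepsilon }L\left( \varepsilon \right) $; but this is exactly what is needed, since bounding a sum below by the sum of the separate infima is always legitimate. I do not expect any genuine obstacle here: once the lowest-order term of $G_{2}$ has been isolated, the whole argument reduces to a single monotonicity/first-derivative calculation, completely parallel to the $G_{1}$ case.
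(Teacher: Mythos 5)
Your proof is correct and follows essentially the same route as the paper: isolate the $k=2$ term of $G_{2}$ to force $T_{\varepsilon }\leq \sqrt{\varepsilon /\left| \left( g\left( 2\right) \right) ^{2}-g\left( 3\right) \right| }$, substitute into the formula for $\kappa _{\varepsilon }$, then split off $\left| g\left( 2\right) \right| /\left( 1-\varepsilon \right) \geq \left| g\left( 2\right) \right| $ and minimize the remaining term $\frac{1}{\left( 1-\varepsilon \right) }\sqrt{\left| \left( g\left( 2\right) \right) ^{2}-g\left( 3\right) \right| /\varepsilon }$ over $\varepsilon $. The only cosmetic difference is that you locate the maximum of $\left( 1-\varepsilon \right) \sqrt{\varepsilon }$ at $\varepsilon =\frac{1}{3}$ by a first-derivative computation, whereas the paper obtains the same value $\frac{2}{3\sqrt{3}}$ via the arithmetic--geometric mean inequality applied to $\left( 1-\varepsilon \right) /2$, $\left( 1-\varepsilon \right) /2$, $\varepsilon $.
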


\begin{proof}
If we consider only the second order term of the power series
$G_{2}\left( T\right) =\sum _{k=2}^{\infty }\left| g\left( k+1\right) -g\left( 2\right) g\left( k\right) \right| T^{k}$,
then for positive $T$ we always have
\[
G_{2}\left( T\right) =\sum _{k=2}^{\infty }\left| g\left( k+1\right) -g\left( 2\right) g\left( k\right) \right| T^{k}\geq \left| \left( g\left( 2\right) \right) ^{2}-g\left( 3\right) \right| T^{2}.
\]
Thus, $G_{2}\left( T\right) >\varepsilon $ if
$T>\sqrt{\frac{\varepsilon }{\left| \left( g\left( 2\right) \right) ^{2}-g\left( 3\right) \right| }}$.
The case 
$G_{2}\left( T\right) \leq \varepsilon $ is only possible if
$T\leq \sqrt{\frac{\varepsilon }{\left| \left( g\left( 2\right) \right) ^{2}-g\left( 3\right) \right| }}$.
This forces
$T_{\varepsilon }\leq \sqrt{\frac{\varepsilon }{\left| \left( g\left( 2\right) \right) ^{2}-g\left( 3\right) \right| }}$.

Applying the last inequality now to
\begin{equation}
\kappa_{\varepsilon }:=\frac{1}{1-\varepsilon }\left( \frac{1}{T_{\varepsilon }}+\left| g\left( 2\right) \right| \right)
\label{eq:hauptkappa}
\end{equation}
from Theorem~\ref{main} shows the lower bound
$\kappa_{\varepsilon }\geq \frac{1}{1-\varepsilon }\left( \sqrt{\frac{\left| \left( g\left( 2\right) \right) ^{2}-g\left( 3\right) \right| }{\varepsilon }}+\left| g\left( 2\right) \right| \right) $
in the
proposition depending on $\varepsilon $.

It is clear that
\begin{equation}\frac{1}{1-\varepsilon }\left( \sqrt{\frac{\left| \left( g\left( 2\right) \right) ^{2}-g\left( 3\right) \right| }{\varepsilon }}+\left| g\left( 2\right) \right| \right) \geq \frac{1}{1-\varepsilon }\sqrt{\frac{\left| \left( g\left( 2\right) \right) ^{2}-g\left( 3\right) \right| }{\varepsilon }}+\left| g\left( 2\right) \right|
\label{eq:abschaetzung}
\end{equation}
for $0<\varepsilon <1$.
To estimate
$\kappa _{\varepsilon }$
independent of $\varepsilon $
we consider the right hand side of the last inequality as a
function in $\varepsilon $. Thus, we are interested in the
minimal value of this function for $0<\varepsilon <1$.
The inequality of arithmetic and geometric means yields
\begin{eqnarray*}
\left( 1-\varepsilon \right) \varepsilon ^{1/2}&=&2\left( \left( 1-\varepsilon \right) /2\right) ^{1/2}\cdot \left( \left( 1-\varepsilon \right) /2\right) ^{1/2}\cdot \varepsilon \\
&\leq &2\left( \frac{\left( 1-\varepsilon \right) /2+\left( 1-\varepsilon \right) /2+\varepsilon }{3}\right) ^{3/2}=\frac{2}{3\sqrt{3}}.
\end{eqnarray*}
We obtain
$\frac{3}{2}\sqrt{3\left| \left( g\left( 2\right) \right) ^{2}-g\left( 3\right) \right| }+\left| g\left( 2\right) \right| $.
\end{proof}
%%
%%
%%                     A versus B
%%
%%
\subsection{Comparing Improvement A and Improvement B}
Let $0<\varepsilon _{1}<1$ and $T_{\varepsilon _{1}}$ as in
Theorem~\ref{schwaecher}. For all $T\geq 0$ we have that
\begin{eqnarray*}
G_{2}\left( T\right)
&\leq & \sum _{k=2}^{\infty }\left( \left| g\left( k+1\right) \right| +\left| g\left( 2\right) g\left( k\right) \right| \right) T^{k} \\
&=&
\left( 1+\left| g\left( 2\right) \right| T\right) G_{1}\left( T\right) -\left| g\left( 2\right) \right| T.
\end{eqnarray*}
Let
$\varepsilon _{2}$ be such that
\[
\left( 1+\left| g\left( 2\right) \right|
T_{\varepsilon _{1}}\right) G_{1}\left( T_{\varepsilon _{1}}\right) -\left| g\left( 2\right) \right| T_{\varepsilon _{1}}\leq \varepsilon _{2}\leq \left( 1+\left| g\left( 2\right) \right| T_{\varepsilon _{1}}\right) \varepsilon _{1}-\left| g\left( 2\right) \right| T_{\varepsilon _{1}}
<1.
\]
Then
\[
0\leq G_{2}\left( T_{\varepsilon _{1}}\right) \leq \left( 1+\left| g\left( 2\right) \right| T_{\varepsilon _{1}}\right) G_{1}\left( T_{\varepsilon _{1}}\right) -\left| g\left( 2\right) \right| T_{\varepsilon _{1}}
\leq
\varepsilon _{2}.
\]
This shows that we can choose
$T_{\varepsilon _{2}}=T_{\varepsilon _{1}}$.

Let $\kappa _{1,\varepsilon }$ and $\kappa _{2,\varepsilon }$
be the respective constants from Theorems~\ref{schwaecher}
and~\ref{main}. Then
\begin{eqnarray*}
\kappa _{2,\varepsilon _{2}}&=&\frac{1}{1-\varepsilon _{2}}\left( \frac{1}{T_{\varepsilon _{1}}}+\left| g\left( 2\right) \right| \right)
=
\frac{1}{1-\varepsilon _{2}}\left( 1+\left| g\left( 2\right) \right| T_{\varepsilon _{1}}\right) \frac{1}{T_{\varepsilon _{1}}} \\
&\leq
&\frac{1}{1-\left( 1+\left| g\left( 2\right) \right| T_{\varepsilon _{1}}\right) \varepsilon _{1}+\left| g\left( 2\right) \right| T_{\varepsilon _{1}}
}\left( 1+\left| g\left( 2\right) \right| T_{\varepsilon _{1}}\right) \frac{1}{T_{\varepsilon _{1}}} \\
&=&\frac{1}{1-\varepsilon _{1}}\frac{1}{T_{\varepsilon _{1}}}=\kappa _{1,\varepsilon _{1}}.
\end{eqnarray*}
This shows that the minimal value of the
$\kappa _{2,\varepsilon }$ is never larger than the
minimal value of the $\kappa _{1,\varepsilon }$.

%%%%%%%%%%%%%%%%%%%%%%%%%%  Application %%%%%%%%%%%%%%%%%%%%%%%%%%%%%%%%%%%

%%%%%%%%%%%%%%%%

\section{Applications}
\subsection{Toy example}
Let us consider the case $g(n)=1$ for all $n \in \mathbb{N}$.
We observe that $G_2(T)=0$
for all $T$. Let $0< \varepsilon < 1$. Then we apply Theorem \ref{variantB}.
For all $\vert x \vert > \frac{1}{1- \varepsilon} h(n-1)$ we obtain
\begin{equation*}
\frac{
\left| x+ h\left( n-1\right) \right| -\varepsilon \left| x\right|
}{h\left( n\right) }\left|P_{n-1}^{\mathbf{1},h}\left( x\right) \right| 
<\left| P_{n}^{\mathbf{1},h}\left( x\right) \right|  < \frac{
\left| x+ h\left( n-1\right) \right| +\varepsilon \left| x\right|
}{h\left( n\right) }\left| P_{n-1}^{\mathbf{1},h}\left( x\right) \right|.
\end{equation*}
Let $\varepsilon \rightarrow 0$, then for all $\vert x \vert  > h(n-1)$:
\begin{equation*}
\frac{
\left| x+ h\left( n-1\right) \right|
}{h\left( n\right) }\left|P_{n-1}^{\mathbf{1},h}\left( x\right) \right| 
\leq \left| P_{n}^{\mathbf{1},h}\left( x\right) \right|  \leq \frac{
\left| x+ h\left( n-1\right) \right| 
}{h\left( n\right) } \left| P_{n-1}^{\mathbf{1},h}\left( x\right) \right|.
\end{equation*}
Then,
$\left\vert P_{n}^{\mathbf{1},h}\left( x\right) \right| = \prod_{k=0}^{n-1}   \frac{\vert x + h(k)\vert}{h(k+1)}$ (we define $h(0):=0$).
Since $P_{1}^{\mathbf{1},h}\left( x\right) = x/h(1)$ and
$P_{n}^{\mathbf{1},h}\left( x\right) $ is a polynomial
of degree $n$ with positive leading coefficient, it follows:
\begin{equation}
P_n^{\mathbf{1},h}(x) = \frac{ x (x+h(1))
\cdots (x+h(n-1))}{h(1)
\cdots h(n)}.
\end{equation}
\subsection{Reciprocals of Eisenstein series}
Let $\sigma_{k}(n)= \sum_{d \vert n} d^{k}$ and let $B_{k}$ be the $k$th Bernoulli number. Then
we define Eisenstein series of weight $k$:
\begin{equation}
E_k(\tau):= 1 - \frac{2k}{B_k} \sum_{n=1}^{\infty} \sigma_{k-1}(n) \, q^n \qquad (k=2,4,6,\ldots).
\end{equation}
In \cite{AKN97} it was indicated that the $q$-expansion of the reciprocal of $E_4(\tau)=1 +240 \sum_{n=1}^{\infty} \sigma_3(n) \, q^n$
given by
\begin{equation}
\frac{1}{E_4(\tau)} = \sum_{n=0}^{\infty} \beta_n \, q^n,
\end{equation}
has strictly alternating sign changes: $(-1)^n \beta_n >0$.
Let $\varepsilon_1= \frac{1}{25}
$ and $\varepsilon_2= \frac{1}{982}
$. We can chose $\kappa$ in Theorem 1--4, such that $240 > \kappa$.
(In both cases $T_{\varepsilon }=\frac{87}{20000}$ does the job.
Then $\kappa _{1}=\frac{62500}{261}\approx 239.46$
and $\kappa _{2}=\frac{20408906}{85347}\approx 239.13$.
Note that an approximation of the smallest possible value that
can be obtained by our method is
$\kappa _{2}=\frac{539}{16}\approx 33.7$. This we obtain for
$\varepsilon _{2}=\frac{5}{21}$ and
$T_{\varepsilon _{2}}=\frac{3}{20}$.)

\begin{proof}[Proof of $\kappa _{2}\leq \frac{20408906}{85347}$]
Let $T_{\varepsilon }=\frac{87}{20000}$. Let further $\varepsilon _{1}=\frac{1}{25}$
and $\varepsilon _{2}=\frac{1}{982}$.
We have the well-known estimate
\begin{equation}
\sigma _{3}\left( k\right) 
\leq
\left( 1+\int _{1}^{
\infty }t^{-3}\,\mathrm{d}t\right) k^{3}=
3k^{3}/2
.
\label{eq:sigma3}
\end{equation}
Thus,
$\sigma _{3}\left( k\right) \leq 3k^{3}/2\leq 9\binom{k+2}{3}$.
Let $c_{1}\left( k\right) =\sigma _{3}\left( k+1\right) $
for $k\leq 2$ and
$c_{1}\left( k\right) =9\binom{k+3}{3}$ for $k\geq 3$.
Then
$G_{1}\left( T\right) \leq \sum _{k=1}^{\infty }c_{1}\left( k\right) T^{k}=9\frac{1}{\left( 1-T\right) ^{4}}-9-27T-62T^{2}$
and
$$G_{1}\left( \frac{87}{20000}\right) \leq \frac{1248274072444709335238721}{31446822595409952200000000}<\frac{1}{25}.$$
Thus,
$\kappa _{1}\leq \frac{20000}{87}\frac{25}{24}=\frac{62500}{261}\approx 239.46$.

With (\ref{eq:sigma3}) it also follows that
$
\left| 9\sigma _{3}\left( k\right) -\sigma _{3}\left( k+1\right) \right| \leq 15\left( k+1\right) ^{3}\leq 90\binom{k+3}{3}$.
Let $c_{2}\left( k\right) =\left| 9\sigma _{3}\left( k\right) -\sigma _{3}\left( k+1\right) \right| $
for $k\leq 4$ and $c_{2}\left( k\right) =90\binom{k+3}{3}$
for $k\geq 5$. Then
$G_{2}\left( T\right) \leq \sum _{k=2}^{\infty }c_{2}\left( k\right) T^{k}=\frac{90}{\left( 1-T\right) ^{4}}-90-360T-847T^{2}-1621T^{3}-2619T^{4}$
for $T>0$ and $$G_{2}\left( \frac{87}{20000}\right) \leq 
\frac{25605878110865247894531439480101}{25157458076327961760000000000000000}<\frac{1}{982}.$$
Thus,
$\kappa _{2}\leq \left( \frac{20000}{87}+9\right)
\frac{982}{981}=\frac{20408906}{85347} \approx 239.13$.
\end{proof}
\ \newline \

Note that $\beta_1= -240$, $\beta_n \in \mathbb{Z}$ and $\beta_1 \mid \beta_n$ for all $n \geq 1$. From
(\ref{old}), Theorem 1--4 and Corollary \ref{schwaechererkorollar} the following properties are obtained.
\begin{eqnarray*}
\frac{1}{2} \vert \beta_1 \beta_{n-1} \vert < & \vert \beta_n \vert & < \frac{3}{2} \vert \beta_1 \beta_{n-1} \vert,                       \\
\vert \beta_n -  \beta_1 \vert \, \left| \beta_{n-1} \right\vert &  < & \varepsilon_1 \, \vert \beta_1  \beta_{n-1} \vert,\\
(1 - \varepsilon_1) \vert \beta_1 \beta_{n-1} \vert  < & \vert \beta_n \vert & < (1 + \varepsilon_1) \vert \beta_1 \beta_{n-1} \vert, \\
 \vert \beta_n -( \beta_1 + 9) \vert & < & \varepsilon_2 \vert \beta_1 \beta_{n-1} \vert,\\
 \vert 231 + \varepsilon_2 \beta_1 \vert \,  \vert  \beta_{n-1} \vert < & \vert \beta_n \vert & < 
\vert 231 - \varepsilon_2 \beta_1  \vert \, \vert \beta_{n-1}\vert.
\end{eqnarray*}
Since $\beta_0 =1$ we can deduce that $(-1)^n \beta_n >0$.

In the previous proof we showed that
$G_{2}\left( T_{\varepsilon }\right) <\frac{1}{982}<\frac{1}{250}$
for $T_{\varepsilon }=\frac{87}{20000}$
and $\kappa _{2}<240$.
This leads to the following
\begin{theorem}[\cite{HN20C}]
\label{abschaetzung}Let
$G_{2}\left( T
\right)$ be defined by
$$ \sum _{m=2}^{\infty }\left| \sigma_{3} \left( m+1\right) - 9
\sigma_{3} \left( m\right) \right| T^m $$
with positive radius  of convergence $R$. Suppose that there is
$0<T_{\varepsilon }<1$ such that
$G_{2}\left( T_{\varepsilon }\right) \leq \frac{1}{250} $ and
$\kappa_{2 }  \leq \frac{250}{249}\left( \frac{1}{T_{\varepsilon }}+ \sigma_{3} \left( 2\right) \right) <\frac{8}{\left| B_{4}\right| }=240$,
then
the absolute value of the
$n$th coefficient $\beta_n$ of $1/E_{4}$
can be estimated by
\begin{equation}
240
\left( \left( 1\pm \frac{1}{250}
\frac{240}{231}
\right)
231
\right) ^{n-1}. 
\end{equation}
\end{theorem}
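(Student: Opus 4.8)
The plan is to recognize this statement as the specialization of Theorem~\ref{variantB} to the reciprocal of $E_4$, and then to telescope the resulting two-sided estimate. First I would record the dictionary: writing $E_4(\tau)=1-(-240)\sum_{n=1}^{\infty}\sigma_3(n)\,q^n$ and comparing with the defining series \eqref{Q} for $Q_n^g$, one reads off $\beta_n=Q_n^{\sigma_3}(-240)=P_n^{\sigma_3,\mathbf{1}}(-240)$. Thus the relevant data are $g=\sigma_3$, $h=\mathbf{1}$, $x=-240$, and $\varepsilon=\tfrac{1}{250}$. Since $g$ is normalized and $\sigma_3(2)=9$, the series $G_2$ appearing in Theorem~\ref{variantB} is exactly $\sum_{m\geq 2}\left|\sigma_3(m+1)-9\sigma_3(m)\right|T^m$, matching the hypothesis; likewise $|g(2)|=9$, so the displayed $\kappa_2=\tfrac{250}{249}\bigl(\tfrac{1}{T_\varepsilon}+9\bigr)$ is precisely $\tfrac{1}{1-\varepsilon}\bigl(\tfrac{1}{T_\varepsilon}+|g(2)|\bigr)$.

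Next I would verify the hypotheses of Theorem~\ref{variantB}. The existence of $0<T_\varepsilon<1$ with $G_2(T_\varepsilon)\leq\tfrac{1}{250}$ is assumed in the statement, and is supplied concretely by the preceding computation, where $T_\varepsilon=\tfrac{87}{20000}$ gives $G_2(T_\varepsilon)<\tfrac{1}{982}<\tfrac{1}{250}$. The crucial point is that $h=\mathbf{1}$ makes $h(n-1)=1$ for every $n\geq 2$, so the running condition $|x|>\kappa_2\,h(n-1)$ collapses to the \emph{single} numerical condition $240>\kappa_2$, which is exactly the standing hypothesis $\kappa_2<\tfrac{8}{\left|B_4\right|}=240$. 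Hence Theorem~\ref{variantB} applies to $x=-240$ simultaneously for all $n\geq 2$ (the case $n=1$ being the trivial identity $|\beta_1|=|x|=240$).

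With the hypotheses in place I would substitute the numerical values into \eqref{B2}. Using $h(n)=h(n-1)=1$, $\left|x+g(2)h(n-1)\right|=\left|-240+9\right|=231$, and $\varepsilon|x|=\tfrac{240}{250}$, and factoring $231\pm\tfrac{240}{250}=231\bigl(1\pm\tfrac{1}{250}\tfrac{240}{231}\bigr)$, Theorem~\ref{variantB} yields for every $n\geq 2$
\[
231\Bigl(1-\tfrac{1}{250}\tfrac{240}{231}\Bigr)\,\left|\beta_{n-1}\right|
<\left|\beta_n\right|<
231\Bigl(1+\tfrac{1}{250}\tfrac{240}{231}\Bigr)\,\left|\beta_{n-1}\right|.
\]
Both factors are positive (since $\tfrac{240}{250}<231$), so these are genuine multiplicative bounds.

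Finally I would telescope. Iterating the upper (respectively lower) bound from $n$ down to $2$ and using the base value $|\beta_1|=240$ gives
\[
240\Bigl(231\bigl(1-\tfrac{1}{250}\tfrac{240}{231}\bigr)\Bigr)^{n-1}
<\left|\beta_n\right|<
240\Bigl(231\bigl(1+\tfrac{1}{250}\tfrac{240}{231}\bigr)\Bigr)^{n-1},
\]
which is the asserted estimate. The argument is essentially bookkeeping once the specialization is fixed; the only place demanding care — and in that sense the main obstacle — is the observation that the $h=\mathbf{1}$ normalization renders the hypothesis $|x|>\kappa_2\,h(n-1)$ uniform in $n$, so that one and the same constant $\kappa_2<240$ controls every step of the recursion and the product telescopes cleanly. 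The verification that such a $\kappa_2$ (equivalently, such a $T_\varepsilon$) actually exists is the quantitative input already furnished before the theorem.
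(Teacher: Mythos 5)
Your proposal is correct and matches the paper's (largely implicit) argument: the paper derives this theorem by specializing Theorem~\ref{variantB} to $g=\sigma_3$, $h=\mathbf{1}$, $x=-240$, $\varepsilon=\tfrac{1}{250}$, using the preceding computation to certify $G_2(T_\varepsilon)\leq\tfrac{1}{250}$ and $\kappa_2<240$, and then telescoping from $|\beta_1|=240$ exactly as you do. Your write-up in fact makes explicit the two points the paper leaves to the reader — the identification $\beta_n=P_n^{\sigma_3,\mathbf{1}}(-240)$ and the observation that $h=\mathbf{1}$ makes the condition $|x|>\kappa_2\,h(n-1)$ uniform in $n$ — so nothing is missing.
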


This implies
\begin{equation}
230^{n-1}\leq \frac{\left( -1\right) ^{n}\beta _{n}}{240}\leq 232^{n-1}.
\label{eq:beta}
\end{equation}
The following table displays the first values.
\begin{center}
\begin{minipage}[t]{1.0\textwidth}
{\small
\[
\begin{array}{|r||r|r|r|}
\hline
n & 230^{n-1} & \frac{\beta _{n}}{240} & 232^{n-1} \\ \hline \hline
1 & 1 & -1 & 1 \\ \hline
2 & 230 & 231 & 232 \\ \hline
3 & 52900 & -53308 & 53824 \\ \hline
4 & 12167000 & 12301607 & 12487168 \\ \hline
5 & 2798410000 & -2838775326 & 2897022976 \\ \hline
6 & 643634300000 & 655088819748 & 672109330432 \\ \hline
7 & 148035889000000 & -151171301803544 & 155929364660224 \\ \hline
8 & 34048254470000000 & 34884983226375975 & 36175612601171968 \\ \hline
9 & 7831098528100000000 & -8050218792755033557 & 8392742123471896576 \\ \hline
10 & 1801152661463000000000 & 1857705425589167301906 & 1947116172645480005632 \\
 \hline
\end{array}
\]}
\captionsetup{margin={0cm,0cm,0cm,0cm}}
\captionof{table}{Estimation given by (\ref{eq:beta})
}
\end{minipage}
\end{center}
By dividing $\beta _{n}$ by the estimates we obtain
the figures displayed in Table \ref{norm}:
\begin{center}
\begin{minipage}[t]{1.0\textwidth}
\[
\begin{array}{|r||r|r|}
\hline
n & \frac{\beta _{n}}{240\cdot 230^{n-1}} & \frac{\beta _{n}}{240\cdot 232^{n-1}} \\ \hline \hline
0 & -1.00000000 & -1.00000000 \\ \hline
1 & 1.00434783 & 0.99568966 \\ \hline
2 & -1.00771267 & -0.99041320 \\ \hline
3 & 1.01106329 & 0.98513987 \\ \hline
4 & -1.01442438 & -0.97989396 \\ \hline
5 & 1.01779663 & 0.97467598 \\ \hline
6 & -1.02118009 & -0.96948578 \\ \hline
7 & 1.02457479 & 0.96432322 \\ \hline
8 & -1.02798078 & -0.95918815 \\ \hline
9 & 1.03139810 & 0.95408043 \\ \hline
\end{array}
\]
\captionsetup{margin={0cm,0cm,0cm,0cm}}
\captionof{table}{Normalization} \label{norm}
\end{minipage}
\end{center}

\begin{remark}
The value $\varepsilon _{2}=\frac{1}{982}$
improves the inequalities (\ref{eq:beta}) to
\[
230.7648^{n-1}\leq \frac{\left( -1\right) ^{n}\beta _{n}}{240}\leq 231.2353^{n-1}.
\]
The lower bound is quite close to the
optimal value $e^{\pi \sqrt{3}}
=230.764588\ldots $.
\end{remark}

\subsection{Associated Laguerre polynomials and Chebyshev polynomials of the second kind}
We briefly recall the definition of associated Laguerre polynomials $L_n^{(\alpha)}(x)$ 
and Chebyshev polynomials $U_n(x)$ of the second kind \cite{RS02, Do16}. Both are orthogonal polynomials.
We have
\begin{equation}
L_{n}^{\left( \alpha \right) }\left( x\right) =
\sum _{k=0}^{n}\binom{n+\alpha }{n-k}\frac{(-x)^{k}}{k!} \qquad (\alpha > -1).
\end{equation}
The Chebyshev polynomials are uniquely characterized by
\begin{equation}
U_{n}(\text{cos}(t)) = \frac{\text{sin}((n+1)t)}{\text{sin}(t)} \qquad ( 0 < t < \pi).
\end{equation}
The Chebyshev polynomials are of special interest in the context of applications, 
since they are the only classical orthogonal polynomials whose
zeros can be determined in explicit form (see Rahman and Schmeisser \cite{RS02}, Introduction).
Let $g(n)=\func{id}(n)= n$. Then
\begin{eqnarray}
P_n^{\func{id}}(x) & =& \frac{x}{n} L_{n-1}^{(1)}(-x),\\
Q_n^{\func{id}}(x) & =& x \, U_{n-1}\left(\frac{x}{2} + 1 \right).
\label{eq:tschebyscheff}
\end{eqnarray}
The generating series of the Chebyshev polynomial of the second kind is given by
\begin{equation}
\sum_{n=0}^{\infty} U_n(x) \, q^n = \frac{1}{1 - 2x q + q^2}, \qquad \vert x \vert, \vert q \vert <1.
\end{equation}
With this we can
prove equation (\ref{eq:tschebyscheff}). We have
\begin{eqnarray*}
1+xq\sum _{n=0}^{\infty }U_{n}\left( \frac{x}{2}+1\right) q^{n}&=&
1+\frac{xq}{1-\left( 2+x\right) q+q^{2}}=
\frac{1-2q+q^{2}}{1-\left( 2+x\right) q+q^{2}}\\
&=&\frac{1}{1-xq\frac{1}{\left( 1-q\right) ^{2}}}
=
\frac{1}{1-xq\sum _{n=1}^{\infty }nq^{n-1}}\\
&=&\sum _{n=0}^{\infty }Q_{n}\left( x\right) q^{n}
\end{eqnarray*}
using Definition (\ref{Q}).
Note that
$G_{1}\left( T\right) =\sum _{k=1}^{\infty }\left( k+1\right) T^{k}=\frac{1}{\left( 1-T\right) ^{2}}-1$
and
\[
G_{2}\left( T\right) =\sum _{k=2}^{\infty }\left( k-1\right) T^{k}=\frac{T^{2}}{\left( 1-T\right) ^{2}}.
\]
From this
we obtain the
following values:
\begin{center}
\begin{minipage}[t]{0.6\textwidth}
\[
\begin{array}{|c|c|c||c|c|c|}
\hline
\varepsilon _{1}&T_{\varepsilon _{1}}&\kappa _{1}&\varepsilon _{2}&T_{\varepsilon _{2}}&\kappa _{2} \\ \hline
\vphantom{\int }\frac{11}{25}&\frac{1}{6}&\frac{75}{7}&\frac{1
}{4
}&\frac{1
}{3
}&\frac{20
}{3
} \\ \hline
\end{array}
\]
\captionsetup{margin={0cm,0cm,0cm,0cm}}
\captionof{table}{Case $g(n)=n$}
\end{minipage}
\end{center}
If we consider the special case $\varepsilon_1 = 1/2$ in Improvement A, we can chose $T_{\varepsilon _{1}}=2/11$ and finally get $\kappa_1 = 11$.

This
leads to several applications. For example, let $\vert x \vert >\left( 20/3\right) \, n $ then $L_n^{(1)}(x) \neq 0$ and the estimates hold
\begin{equation*}
\left|(\vert x + 2n \vert - 1/4 \vert x \vert )\right| \,\, \vert L_{n-1}^{(1)}(x)\vert < n  \vert L_{n}^{(1)}(x)\vert < 
|(\vert x + 2n \vert + 1/4 \vert x \vert) | \,\, \vert L_{n-1}^{(1)}(x)\vert.
\end{equation*}

\subsection{Powers of the Dedekind $\eta$-function.}
Let us recall the well-known identity:
\begin{equation}
\prod_{n=1}^{\infty} \left( 1 - q^n \right) = \text{exp} \left( - \sum_{n=1}^{\infty} \sigma(n) \, \frac{q^n}{n} \right) \qquad (z \in \mathbb{C}).
\end{equation}
The $q$-expansion of the $-z$th power of the Euler product defines the
D'Arcais polynomials 
\begin{equation}
\sum_{n=0}^{\infty} P_n^{\sigma}(z) \, q^n = \prod_{n=1}^{\infty} \left( 1 - q^n \right)^{-z} \qquad ( z \in \mathbb{C}),
\end{equation}
where $P_0^{\sigma}(x)=1$ and
$P_n^{\sigma}\left( x\right) = \frac{x}{n} \sum_{k=1}^{n} \sigma(k) P_{n-k}^{\sigma}(x)$, as polynomials.
Note that these polynomials evaluated at $-24$ are directly related to the Ramanujan $\tau$-function: $\tau(n)= P_{n-1}^{\sigma}(-24)$,
which gives also a link to the Lehmer conjecture \cite{Le47}.

In the spirit of this paper, let $\varepsilon :=\frac{3}{14}$. Then $T_{\varepsilon }: =\frac{2}{11}$
satisfies the assumptions of Theorem \ref{variantB}. We obtain the
\begin{corollary}\label{top}
Let $\kappa = \frac{119}{11}$. Then $P_n^{\sigma}(z) \neq 0$ for all complex $z$ with $\vert z \vert  > \kappa \,\, (n-1)$.
\end{corollary}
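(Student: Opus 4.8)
The plan is to recognize Corollary \ref{top} as the special case $g=\sigma$, $h=\func{id}$ of the general non-vanishing statement in Corollary \ref{hauptkorollar}, and to verify that the prescribed data $\varepsilon=\frac{3}{14}$, $T_{\varepsilon}=\frac{2}{11}$ genuinely meet the hypotheses of Theorem \ref{variantB} (equivalently Theorem \ref{main}). First I would check the structural assumptions: $\sigma$ is normalized since $\sigma(1)=1$, it is of moderate growth, and $h(n)=n$ satisfies $0<h(n)\le h(n+1)$. Recording $g(2)=\sigma(2)=3$, the constant produced by the theorem is
\[
\kappa_{\varepsilon}=\frac{1}{1-\varepsilon}\left(\frac{1}{T_{\varepsilon}}+|g(2)|\right)=\frac{14}{11}\left(\frac{11}{2}+3\right)=\frac{14}{11}\cdot\frac{17}{2}=\frac{119}{11},
\]
which matches the asserted value. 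Everything thus reduces to the single analytic inequality $G_{2}\bigl(\tfrac{2}{11}\bigr)\le\tfrac{3}{14}$, where $G_{2}(T)=\sum_{k\ge 2}\left|\sigma(k+1)-3\,\sigma(k)\right|T^{k}$.

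To establish this inequality I would split $G_{2}$ into a finite head and a tail. Compute the coefficients $c_{k}=\left|\sigma(k+1)-3\,\sigma(k)\right|$ exactly for $k\le K$ (an explicit cutoff such as $K=8$, or slightly larger for safety), evaluate these terms at $T=\frac{2}{11}$ and add them as exact rationals; then bound the remaining tail $\sum_{k>K}c_{k}\bigl(\tfrac{2}{11}\bigr)^{k}$ from above. For the tail I would use the elementary estimate $\sigma(k)\le k\,H_{k}\le k\,(1+\ln k)$, which gives $c_{k}\le\sigma(k+1)+3\,\sigma(k)\le 4\,(k+1)\bigl(1+\ln(k+1)\bigr)$; since $T=\frac{2}{11}<\frac{1}{5}$, the factor $T^{k}$ dominates this polylogarithmic growth and the tail is majorised by a rapidly convergent series whose sum is of order $10^{-5}$. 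Combining the exact head with this tail bound yields $G_{2}\bigl(\tfrac{2}{11}\bigr)<\tfrac{3}{14}$.

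The main obstacle is the \emph{tightness} of the inequality: numerically $G_{2}\bigl(\tfrac{2}{11}\bigr)\approx 0.2140$ while the target $\tfrac{3}{14}\approx 0.2143$, so there is very little room to spare. In particular, the clean closed-form majorant $\sigma(k)\le\binom{k+1}{2}$, equivalently the Lambert-series bound $\sum_{k}\sigma(k)T^{k}\le\frac{T}{(1-T)^{3}}$, overshoots badly, producing a value near $0.73$, and is therefore useless here; this is precisely why $\sigma=\sigma_{1}$ cannot be treated by the same $\zeta(3)\le\frac{3}{2}$ device that handles $\sigma_{3}$ in the Eisenstein example. One must therefore carry enough exact coefficients to push the head below $\frac{3}{14}$ with margin before invoking any tail estimate, and choose the tail bound sharply enough that it fits inside the tiny remaining headroom. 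Once $G_{2}\bigl(\tfrac{2}{11}\bigr)\le\tfrac{3}{14}$ is secured, Corollary \ref{hauptkorollar} applies verbatim and yields $P_{n}^{\sigma}(z)\neq 0$ for all complex $z$ with $|z|>\kappa\,(n-1)=\tfrac{119}{11}(n-1)$, completing the proof.
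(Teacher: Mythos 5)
Your proposal is correct and takes essentially the same approach as the paper: specialize Corollary \ref{hauptkorollar} (via Theorem \ref{main}) to $g=\sigma$, $h=\func{id}$ with $\varepsilon=\frac{3}{14}$, $T_{\varepsilon}=\frac{2}{11}$, check $\kappa_{\varepsilon}=\frac{14}{11}\cdot\frac{17}{2}=\frac{119}{11}$, and verify $G_{2}\bigl(\tfrac{2}{11}\bigr)<\tfrac{3}{14}$ by keeping the early coefficients $\left|\sigma(k+1)-3\sigma(k)\right|$ exact and majorizing only the tail. One small correction: the bound $\sigma(k)\le\binom{k+1}{2}$ is not useless here---it fails only when applied to the whole series, and the paper uses precisely this bound (justified by $\sigma(k)\le(1+\ln k)k\le\binom{k+1}{2}$ for $k\ge 4$) for the tail $k\ge 8$, with exact coefficients for $k\le 7$, obtaining the closed-form majorant $\frac{4}{(1-T)^{3}}-4-12T-19T^{2}-35T^{3}-45T^{4}-78T^{5}-84T^{6}-135T^{7}$, whose value $\frac{3043993780}{14206147659}$ at $T=\frac{2}{11}$ lies below $\frac{3}{14}$ by a margin of roughly $10^{-5}$.
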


We have to show that
$G_2\left( T_{\varepsilon }\right) =
\sum _{k=2}^{\infty }\left| \sigma \left( k+1\right) -3\sigma \left( k\right) \right| T_{\varepsilon }^{k}<\varepsilon $. 
For this let 
$c\left( k\right) =\left| \sigma \left( k+1\right) -3\sigma \left( k\right) \right| $
for $1\leq k\leq 7$ and
$c\left( k\right) =4\binom{
k+2}{2}
$ for $k\geq 8$. Then
$\left| \sigma \left( k+1\right) -3\sigma \left( k\right) \right| \leq c\left( k\right) $
for all $k\in \mathbb{N}$ since
$$\sigma \left( k\right) \leq \left( 1+\ln \left( k\right) \right) k\leq \left( \frac{k}{4}+\ln \left( 4\right) \right) k\leq \binom{k+1}{2}$$
for $k\geq 4$.
This
implies
$G_2\left( T \right) \leq \sum _{k=2}^{\infty }c\left( k\right) T^{k}$
for $0\leq T \leq 1\leq R$. The upper bound is
now almost, except for the first $8$ terms, a multiple of the second derivative of the geometric
series of $T$. Hence,
\begin{equation*} G_2(T) \leq
\frac{4}{\left( 1-T\right) ^{3}}-4
-12T-
19T^{2}-
35T^{3}-45T^{4}-78T^{5}-84T^{6}-135T^{7}.
\end{equation*}
For $T=T_{\varepsilon }=\frac{2}{11}$ we obtain
\begin{equation*}
G_2 \left( T_{\varepsilon }\right) \leq    \frac{3043993780}{14206147659}   <   \frac{3}{14}=\varepsilon .
\end{equation*}
The claim now follows from Corollary \ref{hauptkorollar}.
\begin{remark}
\begin{itemize} 
\item[]
\item[a)]Let $\varepsilon$ and $\kappa$ be as above, and let 
$h$ be an arbitrary arithmetic function with $0 < h(n) \leq h(n+1)$. 
Then $P_n^{\sigma,h}(x)$ satisfies (\ref{B1}), (\ref{B2}), and (\ref{B3}) obtained by Improvement B.
\item[b)] The value $\varepsilon = \frac{3}{14}$ already leads to 
$$\kappa_{\varepsilon} = \frac{119}{11} = 10.\overline{81}.$$ Note only minor further improvements can be achieved. 
\item[c)] Corollary \ref{top} improves our previous result \cite{HN20B}, where $\kappa =15$.
\end{itemize}
\end{remark}

%%%
\begin{proposition}
Let $\varepsilon = 0.217$ and $T_{\varepsilon}= 0.18289$. Then the assumptions of Theorem \ref{main}
are fulfilled. Furthermore we can take $\kappa=10.815$.
\end{proposition}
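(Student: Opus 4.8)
The plan is to reduce the proposition to a single numerical inequality together with one formula evaluation. For $g=\sigma$ we have $g(2)=\sigma(2)=3$, so the series controlling Improvement~B is
\[
G_2(T) = \sum_{k=2}^\infty \bigl| \sigma(k+1) - 3\sigma(k) \bigr| T^k,
\]
which is exactly the series already treated in the proof of Corollary~\ref{top}. The assumptions of Theorem~\ref{main} are: $0<\varepsilon<1$, which holds since $\varepsilon=0.217$; a positive radius of convergence $R$, which holds because the coefficients grow only polynomially so that $R=1$; the condition $0<T_\varepsilon<R$, which holds since $T_\varepsilon=0.18289<1=R$; and finally $G_2(T_\varepsilon)\le\varepsilon$, which is the one substantive thing to verify. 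Once that inequality is in hand, the constant is read off from $\kappa_\varepsilon=\frac{1}{1-\varepsilon}\bigl(\tfrac{1}{T_\varepsilon}+|g(2)|\bigr)=\frac{1}{1-0.217}\bigl(\tfrac{1}{0.18289}+3\bigr)$, which I would evaluate and check is at most $10.815$.

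The core work is therefore bounding $G_2$ at $T_\varepsilon=0.18289$, and for this I would reuse the majorant strategy of the Corollary~\ref{top} proof. I fix a cutoff $K$ and keep the exact coefficients $c(k)=|\sigma(k+1)-3\sigma(k)|$ for $2\le k\le K$, while for $k>K$ I replace them by the smooth bound $c(k)=4\binom{k+2}{2}$. This majorization is legitimate for $k\ge 4$ because $\sigma(m)\le\binom{m+1}{2}$ for $m\ge 4$, so that
\[
\bigl|\sigma(k+1)-3\sigma(k)\bigr|\le \sigma(k+1)+3\sigma(k)\le \binom{k+2}{2}+3\binom{k+1}{2}\le 4\binom{k+2}{2}.
\]
I would then sum the tail in closed form using $\sum_{k=0}^\infty \binom{k+2}{2}T^k=(1-T)^{-3}$, arriving at an upper bound of the shape
\[
G_2(T)\le \frac{4}{(1-T)^3}-\bigl(\text{explicit polynomial in }T\bigr),
\]
valid for $0\le T<1$, where the subtracted polynomial records both the low-order binomial terms removed from $(1-T)^{-3}$ and the corrections coming from the exact values $c(2),\dots,c(K)$.

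Finally I would evaluate this rational majorant at $T=0.18289$ and confirm it is strictly below $0.217$, then close with the one-line computation of $\kappa$. The main obstacle is quantitative rather than conceptual: since $T_\varepsilon=0.18289$ is slightly larger than the value $2/11$ used for Corollary~\ref{top}, the quantity $G_2(T_\varepsilon)$ is correspondingly larger and the margin against $\varepsilon=0.217$ is thin, so the cutoff $K$ must be taken large enough (likely a little beyond the $K=7$ used there) that the crude tail bound $4\binom{k+2}{2}$ does not overshoot the target. The remaining verification is exact rational arithmetic evaluating $\frac{4}{(1-T)^3}$ minus the polynomial at $T=0.18289$ and comparing with $0.217$, after which $\kappa=10.815$ follows immediately from the formula of Theorem~\ref{main} via Corollary~\ref{hauptkorollar}.
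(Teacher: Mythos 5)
Your proposal is correct and takes essentially the same route as the paper: the paper's proof keeps the exact coefficients $c(k)=\left|\sigma(k+1)-3\sigma(k)\right|$ up to the cutoff $k\le 11$ (your anticipated ``a little beyond $K=7$''), majorizes the tail by $4\binom{k+2}{2}$ using $\sigma(k)\le\binom{k+1}{2}$, and evaluates the resulting rational bound $\frac{4}{(1-T)^3}-\bigl(4+12T+19T^{2}+\cdots+304T^{11}\bigr)$ at $T=0.18289$ to get $G_2(T_\varepsilon)<0.216998<0.217$, exactly the thin margin you predicted. The only items you leave unexecuted --- the specific cutoff $K=11$ and the final arithmetic --- are precisely what the paper supplies, and your check of $\kappa_\varepsilon=\frac{1}{1-\varepsilon}\left(\frac{1}{T_\varepsilon}+3\right)\approx 10.814\le 10.815$ closes the argument as in the paper via Corollary~\ref{hauptkorollar}.
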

%%%%%%%%%%%%%%%%%%%%%%%%%%%
\begin{proof}
Let $\varepsilon$ and $T_{\varepsilon}$ be given.
We have to show that
$$G_2\left( T_{\varepsilon }\right) =\sum _{k=2}^{\infty }\left| \sigma \left( k+1\right) -
3\sigma \left( k\right) \right| T_{\varepsilon }^{k}<\varepsilon . $$
Let $c\left( k\right) =\left| \sigma \left( k+1\right) -3\sigma \left( k\right) \right| $
for $1\leq k\leq 11$ and
$c\left( k\right) =4\binom{
k+2}{2}
$ for $k\geq 12$. Then
$\left| \sigma \left( k+1\right) -3\sigma \left( k\right) \right| \leq c\left( k\right) $
for all $k\in \mathbb{N}$ as
\[
\sigma \left( k\right) \leq \left( 1+\ln \left( k\right) \right) k\leq \left( \frac{k}{4}+\ln \left( 4\right) \right) k\leq \binom{k+1}{2}
\]
for $k\geq 4$.
This
implies
$G_2\left( q\right) \leq \sum _{k=2}^{\infty }c\left( k\right) T^{k}$
for $0\leq T \leq 1\leq R$. The upper bound is almost
(except for the first $12$ terms) a multiple
of the second derivative of the geometric
series of $T$. Hence $G_2(T) \leq \sum _{k=2}^{\infty }c\left( k\right) T^{k} \leq$
\begin{eqnarray*}
&&4\sum _{k=0}^{\infty }\binom{k+2}{2} T^{k}-4-12T-19T^{2}-35T^{3}-45T^{4}-78T^{5}-84T^{6}-135T^{7} \\
&&{}-148T^{8}-199T^{9}-222T^{10}-304T^{11} \\
&=&
\frac{4}{\left( 1-T\right) ^{3}}-4
-12T-
19T^{2}-
35T^{3}-45T^{4}-78T^{5}-84T^{6}-135T^{7} \\
&&{}-148T^{8}-199T^{9}-222T^{10}-304T^{11}.
\end{eqnarray*}
For $T=T_{\varepsilon }=0.18289$ we obtain
\begin{equation*}
G_2
\left( T_{\varepsilon }\right)
 < 0.216998<\varepsilon .
\end{equation*}
The claim now follows from
Corollary~\ref{hauptkorollar}.
\end{proof}

\begin{minipage}[t]{0.99\textwidth}
\[
\begin{array}{|c||c|c|c|c|c|c|c|c|c|c|c|c|c|c|}
\hline
k & 1 &
2 &
3 &
4 &
5 &
6 &
7 &
8 &
9 &
10 &
11 &
12 &
13 &
14 \\ \hline
\left| \sigma \left( k+1\right) -3\sigma \left( k\right) \right| & 0 &
5 &
5 &
15 &
6 &
28 &
9 &
32 &
21 &
42 &
8 &
70 &
18 &
48 \\ \hline
\end{array}
\]
\captionsetup{margin={0cm,0cm,0cm,0cm}}
\captionof{table}{Values of $\left| \sigma \left( k+1\right) -3\sigma \left( k\right) \right| $}
\end{minipage}
\section{Proof of Theorem \ref{schwaecher} and Theorem \ref{versionA2}}
\begin{proof}[Proof of Theorem~\ref{schwaecher}]
The proof will be by induction on $n$.
The case $n=1$ is obvious:
$\left| P_{1}^{g,h}\left( x\right) - \frac{x}{h\left( 1\right) }P_{0}^{g,h}\left( x\right) \right| =0<\varepsilon \frac{\left| x\right| }{h\left( 1\right) }\left| P_{0}^{g,h}\left( x\right) \right| $
for
$\left| x\right| >\kappa \,\, h( 0)$.

Let now $n\geq 2$. Then
\[
P_{n}^{g,h}\left( x\right)
=
\frac{x}{h\left( n\right) }\left( P_{n-1}^{g,h}\left( x\right) +\sum _{k=1}^{n-1}g\left( k+1\right) P_{n-1-k}^{g,h}\left( x\right) \right) .
\]
The basic idea for the induction step is to use the
inequality
\[
\left| P_{n}^{g,h}\left( x\right) -\frac{x}{h\left( n\right) }P_{n-1}^{g,h}\left( x\right) \right|
\leq
\frac{\left| x\right| }{h\left( n\right) }\sum _{k=1}^{n-1}\left| g 
\left( k+1\right) \right| \left| P_{n-1-k}^{g,h}\left( x\right) \right| .
\]
We estimate the sum by the following property for $1\leq j\leq n-1$:
\begin{eqnarray*}
\left| P_{j}^{g,h}\left( x\right) \right|
&\geq &
\left| \frac{x}{h\left( j\right) }\right| \left| P_{j-1}^{g,h}\left( x\right) \right| -\left| P_{j}^{g,h}-\frac{x}{h\left( j\right) }P_{j-1}^{g,h}\left( x\right) \right| \\
&> &\left( \frac{\left| x\right| }{h\left( j\right) }-\varepsilon \frac{\left| x\right| }{h\left( j\right) }\right) \left| P_{j-1}^{g,h}\left( x\right) \right| \\
&=&\frac{\left( 1-\varepsilon \right) \left| x\right| }{h\left( j\right) }\left| P_{j-1}^{g,h}\left( x\right) \right| \end{eqnarray*}
for
$\left| x\right| > \kappa \,\, h(n-1)
$.
Thus,
\[
\left| P_{j-1}^{g,h}\left( x\right) \right|
<\frac{h\left( j\right) }{\left( 1-\varepsilon \right) \left| x\right| }\left| P_{j}^{g,h}\left( x\right) \right|.
\]
Further, we have
\begin{eqnarray*}
\left| P_{n-k}^{g,h}\left( x\right) \right|
&<&\left| P_{n-k+1}^{g,h}\left( x\right) \right| \frac{h\left( n-k+1\right)
}{\left( 1-\varepsilon \right) \left| x\right| }<\ldots \\
&<&\left| P_{n-1}^{g,h}\left( x\right) \right| \prod _{j=1}^{k-1}\frac{h\left( n-j\right)
}{\left( 1-\varepsilon \right) \left| x\right| } \\
&\leq &\left| P_{n-1}^{g,h}\left( x\right) \right| \left( \frac{h\left( n-1\right) }{\left( 1-\varepsilon \right) \left| x \right| }\right) ^{k-1}
\end{eqnarray*}
for
$\left| x\right| >\kappa \,\, h (n-1) \geq \kappa \,\, h( n-k)$
for all $2 \leq k \leq n$ by assumption.
Using this, we can now estimate the sum by
\[
\sum _{k=1}^{n-1}\left| g 
\left( k+1\right) \right| \left| P_{n-1-k}^{g,h}\left( x\right) \right|
<
\left| P_{n-1}^{g,h}\left( x\right) \right| \sum _{k=2}^{n-1}\left| g 
\left( k+1\right) \right| \left( \frac{h\left( n-1\right)
}{\left( 1-\varepsilon \right) \left| x\right| }\right) ^{k}
\]
and we obtain
\begin{eqnarray*}
&&\left| P_{n}^{g,h}\left( x\right) -\frac{x}{h\left( n\right) }P_{n-1}^{g,h}\left( x\right) \right| \\
&<&
\frac{\left| x\right| }{h\left( n\right) }\left| P_{n-1}^{g,h}\left( x\right) \right| \sum _{k=1}^{n-1}\left| g\left( k+1\right) \right| \left( \frac{h\left( n-1\right)
}{\left( 1-\varepsilon \right) \left| x\right| }\right) ^{k}.
\end{eqnarray*}
Estimating the sum using the
assumption from the theorem we obtain
\[
\sum_{k=1}^{n-1}
\left| g(k+1) \right| \left( \frac{h(n-1)}{\left( 1-\varepsilon \right) 
\left| x \right|} \right)^{k} 
\leq
G_{1}
\left( \frac{h\left( n-1\right)
}{\left( 1-\varepsilon \right) \left| x\right| }\right)
\leq G_{1}\left( T_{\varepsilon }\right) \leq
\varepsilon ,
\]
since
$\left| x\right| >\kappa \,\, h(n-1) =\frac{h\left( n-1\right) }{1-\varepsilon }\frac{1}{T_{\varepsilon }}$
which is equivalent to
$\frac{\left( 1-\varepsilon \right) \left| x\right| }{h\left( n-1\right) }>\frac{1}{T_{\varepsilon }}$
and $G_{1}$ increases on $\left[ 0,R\right) $
as $\left| g\left( k+1\right) \right| \geq 0$ for all $k\in \mathbb{N}$.
\end{proof}

\begin{proof}[Proof of Theorem \ref{versionA2}]
Consider the following upper and lower bounds:
\begin{eqnarray*}
\vert P^{g,h}_{n}\left( x\right) \vert & \leq &
\left|\frac{x}{h\left( n\right) }P^{g,h}_{n-1}\left( x\right) \right| 
+ \left| P^{g,h}_{n}\left( x\right) -\frac{x}{h\left( n
\right) }P^{g,h}_{n-1}\left( x\right) \right|, \\
\vert P^{g,h}_{n}\left( x\right) \vert & \geq &
\left|\frac{x}{h\left( n\right) }P^{g,h}_{n-1}\left( x\right) \right| 
- \left| P^{g,h}_{n}\left( x\right) -\frac{x}{h\left( n
\right) }P^{g,h}_{n-1}\left( x\right) \right|  .
\end{eqnarray*}
Applying (\ref{Th1:formel}) leads to the desired result.
\end{proof}

\section{Proof of Theorem \ref{main} and Theorem \ref{variantB}}
\begin{proof}[Proof of Theorem~\ref{main}]
The proof will be by induction on $n$.
The case $n=1$ is obvious:
$$\left| P_{1}^{g,h}\left( x\right) - \frac{x+g\left( 2\right) h\left( 0\right) }{h\left( 1\right) }P_{0}^{g,h}\left( x\right) \right| =0<\varepsilon \frac{\left| x\right| }{h\left( 1\right) }\left| P_{0}^{g,h}\left( x\right) \right| $$
for
$\left| x\right| >\kappa \,\, h(0) $. Let now $n\geq 2$. Then
\begin{eqnarray*}
&&P_{n}^{g,h}\left( x\right) -g\left( 2\right) \frac{h\left( n-1\right) }{h\left( n\right) }P_{n-1}^{g,h}\left( x\right) \\
&=&\frac{x}{h\left( n\right) }\left( P_{n-1}^{g,h}\left( x\right) +\sum _{k=2}^{n-1}\left( g\left( k+1\right) -g\left( 2\right) g\left( k\right) \right) P_{n-1-k}^{g,h}\left( x\right) \right) .
\end{eqnarray*}
The basic idea for the induction step is to use the
inequality
\begin{eqnarray*}
&&\left| P_{n}^{g,h}\left( x\right) -\frac{x+g\left( 2\right) h\left( n-1\right) }{h\left( n\right) }P_{n-1}^{g,h}\left( x\right) \right| \\
&\leq &\frac{\left| x\right| }{h\left( n\right) }\sum _{k=2}^{n-1}\left| g 
\left( k+1\right) -g\left( 2\right) g\left( k\right) \right| \left| P_{n-1-k}^{g,h}\left( x\right) \right| .
\end{eqnarray*}
The sum can be estimated using for $1\leq j\leq n-1$ that
\begin{eqnarray*}
&&\left| P_{j}^{g,h}\left( x\right) \right| \\
&\geq &\left| \frac{x+g\left( 2\right) h\left( j-1\right) }{h\left( j\right) }\right| \left| P_{j-1}^{g,h}\left( x\right) \right| -\left| P_{j}^{g,h}-\frac{x+g\left( 2\right) h\left( j-1\right) }{h\left( j\right) }P_{j-1}^{g,h}\left( x\right) \right| \\
&> &\left( \frac{\left| x\right| }{h\left( j\right) }-\frac{\left| g\left( 2\right) \right| h\left( j-1\right) }{h\left( j\right) }-\varepsilon \frac{\left| x\right| }{h\left( j\right) }\right) \left| P_{j-1}^{g,h}\left( x\right) \right| \\
&=&\frac{\left( 1-\varepsilon \right) \left| x\right| -\left| g\left( 2\right) \right| h\left( j-1\right) }{h\left( j\right) }\left| P_{j-1}^{g,h}\left( x\right) \right| \\
&\geq &\frac{\left( 1-\varepsilon \right) \left| x\right| -\left| g\left( 2\right) \right| h\left( j\right) }{h\left( j\right) }\left| P_{j-1}^{g,h}\left( x\right) \right|
\end{eqnarray*}
for
$\left| x\right| > \kappa \,\, h(n-1)
$.
Note that for
$\left| x\right| >\kappa \,h(n-1) $
we have
\[
\left( 1-\varepsilon \right)  \left| x \right| -g\left( 2\right) h\left( j\right) >\left( \frac{1}{T_\varepsilon }+\left| g\left( 2\right) \right| \right) h\left( n-1\right) -g\left( 2\right) h\left( j\right) >0.
\]
Thus,
\[
\left| P_{j-1}^{g,h}\left( x\right) \right|
<\frac{h\left( j\right) }{\left( 1-\varepsilon \right) \left| x\right| -g\left( 2\right) h\left( j\right) }\left| P_{j}^{g,h}\left( x\right) \right|.
\]
We use this inequality and obtain
\begin{eqnarray*}
\left| P_{n-k}^{g,h}\left( x\right) \right|
&<&\left| P_{n-k+1}^{g,h}\left( x\right) \right| \frac{h\left( n-k+1\right)
}{\left( 1-\varepsilon \right) \left| x\right| -\left| g\left( 2\right) \right| h\left( n-k+1\right) }<\ldots \\
&<&\left| P_{n-1}^{g,h}\left( x\right) \right| \prod _{j=1}^{k-1}\frac{h\left( n-j\right)
}{\left( 1-\varepsilon \right) \left| x\right| -\left| g\left( 2\right) \right| h\left( n-j\right) } \\
&\leq &\left| P_{n-1}^{g,h}\left( x\right) \right| \left( \frac{h\left( n-1\right) }{\left( 1-\varepsilon \right) \left| x \right| -\left| g\left( 2\right) \right| h\left( n-1\right) }\right) ^{k-1}
\end{eqnarray*}
for
$\left| x\right| >\kappa \,\, h( n-1) \geq \kappa \,\, h(n-k)
$
for all $2 \leq k \leq n$ by assumption.
Using this, we can now estimate the sum by
\begin{eqnarray*}
&&
\sum _{k=2}^{n-1}\left| g 
\left( k+1\right) -g\left( 2\right) g\left( k\right) \right| \left| P_{n-1-k}^{g,h}\left( x\right) \right| \\
&<&
\left| P_{n-1}^{g,h}\left( x\right) \right| \sum _{k=2}^{n-1}\left| g 
\left( k+1\right) -g\left( 2\right) g\left( k\right) \right| \left( \frac{h\left( n-1\right)
}{\left( 1-\varepsilon \right) \left| x\right| -\left| g\left( 2\right) \right| h\left( n-1\right) }\right) ^{k}
\end{eqnarray*}
and we obtain
\begin{eqnarray*}
&&\left| P_{n}^{g,h}\left( x\right) -\frac{x+g\left( 2\right) h\left( n-1\right) }{h\left( n\right) }P_{n-1}^{g,h}\left( x\right) \right| \\
&<&
\frac{\left| x\right| }{h\left( n\right) }\left| P_{n-1}^{g,h}\left( x\right) \right| \sum _{k=2}^{n-1}\left| g\left( k+1\right) -g\left( 2\right) g 
\left( k\right) \right| \left( \frac{h\left( n-1\right)
}{\left( 1-\varepsilon \right) \left| x\right| -\left| g\left( 2\right) \right| h\left( n-1\right) }\right) ^{k}.
\end{eqnarray*}
Estimating the sum using the
assumption from the theorem we obtain
\begin{eqnarray*}
&&\sum _{k=2}^{n-1}\left| g\left( k+1\right) -g\left( 2\right) g
\left( k\right) \right| \left( \frac{h\left( n-1\right)
}{\left( 1-\varepsilon \right) \left| x\right| -\left| g\left( 2\right) \right| h\left( n-1\right) }\right) ^{k} \\
&\leq &
G_2
\left( \frac{h\left( n-1\right)
}{\left( 1-\varepsilon \right) \left| x\right| -\left| g\left( 2\right) \right| h\left( n-1\right) }\right)
\leq G_2\left( T_{\varepsilon }\right) \leq
\varepsilon
\end{eqnarray*}
since
$\left| x\right| >\kappa \,\, h( n-1) =\frac{\kappa \,\,h(n-1) }{1-\varepsilon }\left( \frac{1}{T_{\varepsilon }}+\left| g\left( 2\right) \right| \right) $
which is equivalent to
$\frac{\left( 1-\varepsilon \right) \left| x\right| }{h\left( n-1\right) }-\left| g\left( 2\right) \right| >\frac{1}{T_{\varepsilon }}$
and $G_2$ is increasing on $\left[ 0,R\right) $
as $\left| g\left( k+1\right) -g\left( 2\right) g\left( k\right) \right| \geq 0$ for all $k\in \mathbb{N}$.
\end{proof}

\begin{proof}[Proof of Theorem \ref{variantB}]
This basically follows from Theorem \ref{main} (see also the proof of Theorem \ref{versionA2}).
\end{proof}

\begin{center}
\begin{minipage}[t]{0.6\textwidth}
\[
\begin{array}{|r|r|}
\hline
n & \min \left\{ \func{Re}\left( x\right) :P_{n}^{\sigma ,\func{id}}\left( x\right) =0\right\} \\ \hline \hline
1 & 0 \\ \hline
2 & -3 \\ \hline
3 & -8 \\ \hline
4 & -14 \\ \hline
5 & -20.61187 \\ \hline
6 & -27.64001 \\ \hline
7 & -34.97153 \\ \hline
8 & -42.53511 \\ \hline
9 & -50.28267 \\ \hline
10 & -58.18014 \\ \hline
50 & -410.63656 \\ \hline
100 & -874.47135 \\ \hline
500 & -4687.67815 \\ \hline 
1000 & -9501.75903 \\ \hline 
\end{array}
\]
\captionsetup{margin={0cm,0cm,0cm,0cm}}
\captionof{table}{Minimal zeros of $P_{n}^{\sigma ,\func{id}}\left( x\right) $}\label{roots}
\end{minipage}
\end{center}

%%%%%%%%%%%%%%%%%%%%%%%%%%%%%%%%%%%%%%%%%%%%%%%%%%%%%%%%%%%%%%%%%%%%%%%

\begin{Acknowledgement}
To be entered later.
%%The authors would like to thank Robert Tr{\"o}ger for many useful discussions %%and providing us with Figure $1$.
%%The authors thank the anonymous referee for useful comments.
\end{Acknowledgement}

%%%%%%%%%%%%%%%%%%%%%%%%%%%%%%%%%%%%%%%%%%%%%%%%%%%%%%%%%%%%%%%%%%%%%%%%%


\begin{thebibliography}{MMM99}

\bibitem[AE04]{AE04} G. E. Andrews, K. Eriksson: Integer Partitions. Cambridge University Press (2004).

\bibitem[Ap90]{Ap90}  T. Apostol: Modular Functions and Dirichlet Series in Number Theory. Second Edition.
Springer, Berlin--Heidelberg--New York (1990).






\bibitem[AKN97]{AKN97}
T. Asai, M. Kaneko, H. Ninomiya: Zeros of certain modular
functions and an application. 
Commentarii Mathematici Universitatis Sancti Pauli 46
No.\ 1  (1997),
93--101.








\bibitem[BB05]{BB05}
B. Berndt, P. Bialek: On the power series coefficients of certain quotients of Eisenstein series. 
Trans. American Math. Society 357 No.~11 (2005),
4379--4412.









\bibitem[BK17]{BK17}
K. Bringmann, B. Kane: Ramanujan and coefficients of meromorphic modular forms. 
J. Math. Pures Appl. 107 (2017),
100--122.




\bibitem[Co74]{Co74}
L. Comter: Advanced Combinatorics. Enlarged edition, D. Reidel Publishing Co.,
Dordrecht (1974).


\bibitem[DA13]{DA13} F. D'Arcais: 
D\'{e}veloppement en s\'{e}rie. Interm\'{e}diaire Math. 
20 (1913), 233--234.

\bibitem[Do16]{Do16} B. Doman: The Classical Orthogonal Polynomials.
World Scientific 2016.



\bibitem[Ha10]{Ha10}
G. Han: The Nekrasov--Okounkov hook length formula: refinement, elementary proof
and applications. Ann. Inst. Fourier (Grenoble) 60 No.~1 (2010), 1--29.


\bibitem[HLN19]{HLN19}  B. Heim, F. Luca, M. Neuhauser: Recurrence relations for polynomials obtained by arithmetic functions.
International J. of Number Theory 15 No.~6 (2019), 1291--1303.




\bibitem[HN20A]{HN20A}  B. Heim, M. Neuhauser: On the reciprocal of Klein's absolute $j$-invariant and sign changes.
Research in Number Theory (2020) 6: 4 \texttt{doi:10.1007/s40993-019-0179-5}.

\bibitem[HN20B]{HN20B}  B. Heim, M. Neuhauser: The Dedekind eta 
function and D'Arcais-type polynomials.
Res.\ Math.\ Sci.\ 7: 3
\texttt{doi:10.1007/s40687-019-0201-5}.



\bibitem[HN20C]{HN20C}  B. Heim, M. Neuhauser: Polynomials and reciprocals of Eisenstein series.
International J. of Number Theory \texttt{10.1142/S1793042120400199}.

\bibitem[HNT20]{HNT20}  B. Heim, M. Neuhauser, R. Tr{\"o}ger: Zeros of recursively defined polynomials.
Journal of Difference Equations and Applications
26 No.\ 4 (2020), 510--531 \texttt{doi:10.1080/10236198.2020.1748022}.

\bibitem[HNW18]{HNW18} B. Heim, M. Neuhauser, A. Weisse: 
Records on the vanishing of Fourier coefficients of powers of the Dedekind eta function. Research in Number Theory
4, Article number: 32 (2018) \texttt{doi:10.1007/s40993-018-0125-y}.


\bibitem[Ko04]{Ko04}
B. Kostant: Powers of the Euler product and commutative subalgebras of a complex
simple Lie algebra. Invent.\ Math.\ 158 (2004), 181--226.



\bibitem[KRY09]{KRY09} J. Kung, G. Rota, C. Yan: Combinatorics: the Rota Way.
Cambridge University Press (2009).



\bibitem[Le47]{Le47}
D. Lehmer: The vanishing of Ramanujan's $\tau \left( n\right) $. Duke Math.\ J. 14 (1947),
429--433.

\bibitem[Ma72]{Ma72}
I.~G. Macdonald: Affine root systems and Dedekind's $\eta $-function.
Invent.\ Math.\ 15
(1972), 91--143.

\bibitem[NO06]{NO06}
N. Nekrasov, A. Okounkov: Seiberg--Witten theory and random partitions. The unity
of mathematics. Progr.\ Math.\ 244 Birkh\"{a}user Boston (2006), 525--596.

\bibitem[Ne55]{Ne55}
M. Newman: An identity for the coefficients of certain modular forms. J. London
Math.\ Soc.\ 30 (1955), 488--493.

\bibitem[On03]{On03}
K. Ono: The Web of Modularity: Arithmetic of the Coefficients of Modular Forms
and q-series. Conference Board of Mathematical Sciences 102 (2003).



\bibitem[RS02]{RS02}
Q.~I. Rahman, G. Schmeisser: Analytic Theory of Polynomials.
Clarendon Press, Oxford (2002).




\bibitem[Se85]{Se85}
J. Serre: Sur la lacunarit\'{e} des puissances de $\eta $. Glasgow Math.\ J. 27 (1985), 203--221.


\bibitem[We06]{We06}
B. Westbury: Universal characters from the Macdonald identities. Adv.\ Math., 202
(1) (2006), 50--63.



\bibitem[Wi06]{Wi06}  H. S. Wilf: Generatingfunctionology.
A. K. Peters, Wellesley, Massachusettes, third edition (2006).
\end{thebibliography}
\end{document}